\numberwithin{equation}{section}
\numberwithin{figure}{section}
\theoremstyle{plain}%%%%%%%%%%%%%%%%%%
\newtheorem{thm}{Theorem}[section]
\newtheorem{prp}[thm]{Proposition}
\newtheorem{cor}[thm]{Corollary}
\newtheorem{lem}[thm]{Lemma}
\newtheorem*{thm-nonum}{Theorem}
\newtheorem{claim}{Claim}
\theoremstyle{definition}%%%%%%%%%%%%%%
\newtheorem{dfn}[thm]{Definition}
\newtheorem{exm}[thm]{Example}
\theoremstyle{remark}%%%%%%%%%%%%%%%
\newtheorem{rmk}[thm]{Remark}
\newtheorem{ntn}[thm]{Notation}
\newcommand{\xyR}[1]{% 
\xydef@\xymatrixrowsep@{#1}}
\newcommand{\xyC}[1]{% 
\xydef@\xymatrixcolsep@{#1}}
\def\al{\alpha}
\def\be{\beta}
\def\de{\delta}
\def\ep{\varepsilon}
\def\ze{\zeta}
\def\et{\eta}
\def\th{\theta}
\def\la{\lambda}
\def\ro{\rho}
\def\ta{\tau}
\def\ph{\phi}
\def\ps{\psi}
\def\De{\Delta}
\def\om{\omega}
\def\Aut{\operatorname{Aut}}
\def\calA{{\mathcal A}}
\def\calB{{\mathcal B}}
\def\calC{{\mathcal C}}
\def\calS{{\mathcal S}}
\def\op{^{\text{op}}}
\def\inv{^{-1}}
\def\implies{\text{$\Rightarrow$}\ }
\def\impliedby{\text{$\Leftarrow$}\ }
\def\incl{\hookrightarrow}
\def\iso{\cong}
\def\Ds{\bigoplus}
\def\dsm#1,#2..#3{\bigoplus_{{#1}={#2}}^{#3}}
\def\sm#1,#2..#3{\sum_{{#1}={#2}}^{#3}}
\def\id{1\kern-.25em{\text{{\rm l}}}} %1\!\!{\text{{\rm l}}}   is good for 12pt
\def\isoto{\ \raise.8ex\hbox{$^{\sim}$}\kern-.7em\hbox{$\to$}\ }
\def\ya#1{\overset{#1}{\longrightarrow}}
\def\bg{%
\family{cmr}\size{20}{12pt}\selectfont}
\def\bigzerou{%
\smash{\lower1.7ex\hbox{\bg 0}}}
\def\repr[#1;#2;#3;#4;#5]{
\left(
\begin{matrix}#1\\#2\end{matrix}
#3
\begin{matrix}#4\\#5\end{matrix}
\right)}
\numberwithin{equation}{section}
\numberwithin{figure}{section}
\def\C{\mathcal{C}}
\def\fstorbit{/\!_{_{1}}}
\def\sndorbit{/\!_{_{2}}}
\def\k{\Bbbk}
\def\kCat{\k\text{-}\mathbf{Cat}}
\def\To{\Rightarrow}
\def\Fgt{\operatorname{Fgt}}
\def\Inv{\operatorname{Inv}}
\def\Fun{\operatorname{Fun}}
\def\Cat{G\text{-}\mathbf{Cat}}
\def\GrCat{G\text{-}\mathbf{GrCat}}
\begin{document}

\title[2-categorical Cohen-Montgomery duality]
{A generalization of Gabriel's Galois covering functors II:
2-categorical Cohen-Montgomery duality}

\author{Hideto Asashiba}
\address{
Department of Mathematics,
Faculty of Science,
Shizuoka University,
836 Ohya, Suruga-ku,
Shizuoka, 422-8529, Japan
}
\email{asashiba.hideto@shizuoka.ac.jp}

\begin{abstract}
Given a group $G$,
we define suitable $2$-categorical structures
on the class of all small categories with $G$-actions
and on the class of all small $G$-graded categories,
and prove that 2-categorical extensions of the orbit category construction
and of the smash product construction
turn out to be 2-equivalences (2-quasi-inverses to each other),
which extends the Cohen-Montgomery duality.
Further we characterize equivalences in both 2-categories.
\end{abstract}

\subjclass[2010]{18D05, 16W22, 16W50 }
\keywords{2-categories, orbit categories, smash products}

\thanks{This work is partially supported by Grant-in-Aid for Scientific Research
(C) 21540036 and Challenging Exploratory Research 25610003 from JSPS}

\maketitle

\section{Introduction}

Throughout this paper $G$ is a group and $\Bbbk$ is a commutative
ring, and all categories, functors, and algebras considered here are
assumed to be $\Bbbk$-linear unless otherwise stated.
This is a continuation of the paper \cite{Asa-cover}
and will be applied in subsequent papers \cite{Asa-Ki} and \cite{Asa}.

In \cite{Co-Mo} Cohen and Montgomery proved the following
(called the {\em Cohen-Montgomery duality}).

\begin{thm}
Let $G$ be a finite group of order $n$, $A$ an algebra with
a $G$-action, and $B$ a $G$-graded algebra.
Then we have isomorphisms
$$
\begin{aligned}
(A*G)\#G &\iso M_n(A)\\
(B\#G)*G &\iso M_n(B).
\end{aligned}
$$
\end{thm}
In the above, $*$ and $\#$ stand for
the skew group algebra construction and
the smash product construction, respectively and
$M_n(A)$ denotes the algebra of all
$n\times n$ matrices over $A$.
We can regard each algebra $A$ as a category with a single
object, and then $M_n(A)$ can be regarded as a category
with precisely $n$ objects that are isomorphic to each other,
and $A$ and $M_n(A)$ are equivalent as categories.

Already some attempts have been made to extend this theorem
so that it satisfies the following requirements.
\begin{enumerate}
\item[(a)]
Deal with an arbitrary group $G$;
\item[(b)]
Replace algebras by categories.
\end{enumerate}
For instance (a) 
was investigated in \cite{Be-dual}, \cite{Qu}, and
(b) was examined in \cite{C-M}, \cite{Asa-cover}.
To be more precise let $\calC$ be a category
with a $G$-action and $\calB$ a $G$-graded category.
Then
a $G$-graded category $\mathcal{C}/G$, called the \emph{orbit category}
of $\mathcal{C}$ by $G$ is constructed in
\cite{Asa-cover,C-M,Ke}
(this turns out to be also a generalization of the skew group algebra construction);
and a category $\mathcal{B}\#G$
with a free $G$-action, called the \emph{smash product}
of $\mathcal{C}$ and $G$ is constructed in \cite{C-M}; 
and in \cite{Asa-cover}
we defined a (weakly) $G$-equivariant equivalence
$\ep_{\calC} \colon \mathcal{C}\to(\mathcal{C}/G)\#G$
and a degree-preserving equivalence
$\om_{\calB} \colon \mathcal{B}\to(\mathcal{B}\#G)/G$.
This seems to give a full categorical generalization of Cohen-Montgomery duality.

Here recall the definition of equivalences between categories:
Categories (= objects) $\calA$ and $\calB$ are said to be {\em equivalent} if
there exist a pair of functors (= 1-morphisms) $E \colon \calA \to \calB$
and $F \colon \calB \to \calA$ in mutually reverse directions
such that there exist a pair of natural isomorphisms (= 2-isomorphisms)
$\ep \colon EF \To \id_{\calB}$ and $\et \colon \id_{\calA} \To FE$.
Namely, to define equivalences between objects in a categorical sense
we need
a 2-categorical structure in the class of objects.
In our case, the class $\Cat$ of all small $\k$-categories with $G$-actions and
the class $\GrCat$ of all small $G$-graded $\k$-categories
should have 2-categorical structures.
To insist that the above gives a full categorical generalization of Cohen-Montgomery duality
we have to have an affirmative answer to the following question:
\begin{enumerate}
\item[(i)] Are the $G$-equivariant equivalence $\ep_{\calC}$ and
the degree-preserving equivalence $\om_{\calB}$
obtained in \cite{Asa-cover}
equivalences defined by 2-categorical structures on $\Cat$ and $\GrCat$,
respectively?
\end{enumerate}
Once we have 2-categorical structures on  $\Cat$ and $\GrCat$,
it also becomes  important to consider the following question:
\begin{itemize}
\item[(ii)] Are $\ep_{\calC}$ and $\om_{\calB}$
2-natural in $\calC$ and in $\calB$?
\end{itemize}
These suggest us the following problem:
\begin{enumerate}
\item[(c)]
Not only give an equivalence for each individual category,
but extend it to a 2-equivalence between 2-categories of
$\k$-categories with $G$-action and of $G$-graded $\k$-categories.
\end{enumerate}
In this paper we will give a positive solution to the problem (c)
which includes affirmative answers to both (i) and (ii).
We also give characterizations of equivalences in 2-categories $\Cat$ and $\GrCat$
in terms of a half of a pair of functors in mutually reverse directions,
which give relationships between $G$-equivariant equivalences and equivalences
in $\Cat$
and between degree-preserving equivalences and equivalences in $\GrCat$.
The solution proceeds in the following steps: 
\begin{itemize}
\item to {\em suitably} define a 2-category $\Cat$ of all small $\k$-categories
with $G$-actions (Definition \ref{dfn:G-Cat})
and a 2-category $\GrCat$ of all small $G$-graded $\k$-categories
(Definition \ref{dfn:G-GrCat}); 
\item to extend the orbit category construction to a
2-functor
$$?/G\colon\Cat\to\GrCat$$
(Definition \ref{dfn:2-fun-?/G}) (this is given by the 2-universality of  the canonical functor $(P, \ps)$ that is a generalization
of Gabriel's Galois covering functor) 
and the smash product construction
to a 2-functor
$$?\#G\colon\GrCat\to\Cat$$
(Definition \ref{2-smash}); and
\item to prove the following (see Theorem \ref{main-thm} for detail):
\end{itemize}
\begin{thm}\label{thm-2-eq}
$?/G$ is strictly left $2$-adjoint to $?\# G$ and
they are mutual $2$-quasi-inverses $($in a weak sense$)$.
\end{thm}
Therefore in other words we obtain the following.

\begin{thm}\label{thm-2-mor}
Let $\calC, \calC' \in \Cat$ and $\calB, \calB' \in \GrCat$.
Then
\begin{enumerate}
\item there exists an equivalence $\calC \simeq (\calC/G)\# G$
$($in fact this is given by $\ep_{\calC}$ above$)$
in the $2$-category $\Cat$ that is $2$-natural in $\calC$;
\item there exists an equivalence $\calB \simeq (\calB\# G)/G$
$($in fact this is given by $\om_{\calB}$ above$)$
in the $2$-category $\GrCat$ that is $2$-natural in $\calB$;
\item there exists an {\em isomorphism}
$$
\GrCat(\calC/G, \calB) \iso \Cat(\calC, \calB \# G)
$$
of $\k$-categories that is $2$-natural in $\calC$ and $\calB$;
\item there exists an equivalence
$$
\Cat(\calC, \calC') \simeq \GrCat(\calC/G, \calC'/G)
$$
of $\k$-categories that is $2$-natural in $\calC$ and $\calC'$; and
\item there exists an equivalence
$$
\GrCat(\calB, \calB') \simeq \Cat(\calB \# G, \calB' \# G)
$$
of $\k$-categories that is $2$-natural in $\calB$ and $\calB'$.
\end{enumerate}
\end{thm}
Note that the statements (1) and (2)  above give affirmative answers to
both questions (i) and (ii).
We remark that the definition of {\em degree-preserving functors}
(= 1-morphisms in $\GrCat$) given here is slightly weakened than
that used in \cite{Asa-cover},
where degree-preserving functors were defined as strictly degree-preserving
functors in the sense of this paper
(see Definition \ref{dfn-graded-cat} (2), (3)).
This would be the most important point to establish our 2-equivalences
(see Remark \ref{rmk:necessity-weak} for the necessity of the weaker definition).
The results of this paper are applied at least in papers
\cite{Asa}, \cite{Asa-Ki} and \cite{Vo-Zv} so far.

For general 2-categorical notions we refer the reader to \cite{Bor} or \cite{Ke-St}.
In this paper 2-categories are strict 2-categories, and we use the word 
{}``strictly 2-natural transformation'' to mean
the 2-natural transformation in
a usual sense (e.g., as in \cite{Bor,Ke-St}),
and the word {}``2-natural transformation'' in a weak sense,
i.e., we only require that the
equalities defining the notion of usual 2-natural transformations
hold up to natural isomorphisms. Thus we use
the word {}``2-quasi-inverse''
in a weak sense (although in fact a half of the equalities to define
this notion hold strictly).

The paper is organized as follows.
In sections 2 and 3 we define the 2-category
$\Cat$ and the 2-category $\GrCat$, respectively.
In sections 4, 5 and 6 we recall from \cite{Asa-cover}
fundamental facts about $G$-covering,
the definition and characterizations of orbit categories, and
fundamental facts about smash products, respectively.
In section 7 we extend the orbit category construction
and the smash product construction to 2-functors $?/G$ and $?\#G$,
respectively, and give the precise
statement of the main result.
We also give a characterization of $G$-covering functors that
{\em induce degree-preserving functors} (Definition \ref{dfn:induce-degree-preserving}).
Section 8 is devoted to the proof of the main theorem.
Finally, in section 9 we characterize equivalences in the 2-categories $\Cat$ and $\GrCat$.

For categories $\calA$ and $\calB$ we write $\calA \iso \calB$ (resp.\ $\calA \simeq \calB$)
to express that they are isomorphic (resp.\ equivalent); and
the class of objects (resp.\ morphisms) in $\calA$ is denoted by
$\calA_0$ (resp.\ $\calA_1$).
We sometimes write ``$x \in \calA$'' as an abbreviation of ``$x \in \calA_0$''.
Natural transformations (and 2-morphisms in 2-categories) are expressed
by a double arrow symbol $\To$.

\section*{Acknowledgments}

I would like to thank Bernhard Keller and Dai Tamaki for
useful conversations.
The result was announced at the workshop of LMS Midlands Regional Meeting
2009 held at University of Leicester. 
Section 9 was added during my stay in Bielefeld in September, 2011.
I would like to thank C.\ M.\ Ringel, H.\ Krause and all members of the group
of representation theory of algebras for nice conversations and hospitality.
Finally, I would like to thank the referee for his/her useful comments
to make the paper more readable.
 
\section{The 2-category $G$-$\mathbf{Cat}$}

First in this section we define the 2-category of $G$-categories.

\subsection{$G$-categories}
\begin{dfn}
\label{dfn:G-action}
A $\k$-category with a $G$-\emph{action}, or a $G$-\emph{category} for short,
is a pair $(\mathcal{C},A)$ of a category $\mathcal{C}$
and a group homomorphism $A\colon G\to\Aut(\mathcal{C})$.
We set $A_{a}:=A(a)$ for all $a\in G$. If there is no confusion we
always denote $G$-actions by the same letter $A$, and simply write
$\mathcal{C}=(\mathcal{C},A)$.
\end{dfn}

\begin{ntn}
We denote by $\kCat$ the $2$-category of small $\k$-categories, $\k$-functors between them,
and natural transformations between $\k$-functors.
\end{ntn}

\begin{exm}\label{De-obj}
Any $\k$-category $\calB$ defines a $G$-category $\De\calB:= (\calB, A)$, where
$A \colon G \to \Aut(\calB)$ is the trivial $G$-action, namely it is defined by
$A_a:=\id_{\calB}$ for all $a \in G$.
We sometimes identify $\De\calB$ with $\calB$.
\end{exm}

\subsection{$G$-equivariant functors}
\begin{dfn}[{\cite[Definition 4.8]{Asa-cover}}]
Let $\mathcal{C}$ and $\mathcal{C}'$ be $G$-categories.
Then a $G$-\emph{equivariant} functor from $\calC$ to $\calC'$ is a pair
$(E,\rho)$ of a $\k$-functor
$E\colon\mathcal{C}\to\mathcal{C}'$ and
a family $\rho=(\rho_{a})_{a\in G}$ of natural isomorphisms
$\rho_{a}\colon A_{a}E\To EA_{a}$ ($a\in G$)
 such that the diagrams
 \[
\xymatrix{
A_{ba}E=A_{b}A_{a}E\ar@{=>}[r]^{A_{b}\rho_{a}}\ar@{=>}[rd]_{\rho_{ba}} &
A_{b}EA_{a}\ar@{=>}[d]^{\rho_{b}A_{a}}\\
 & EA_{ba}=EA_{b}A_{a}}
\]
commute for all $a,b\in G$.

A $\k$-functor $E \colon \calC \to \calC'$ is called
a \emph{strictly} $G$-\emph{equivariant} functor
if  $(E, (\id_E)_{a\in G})$ is a $G$-equivariant functor,
i.e., if $A_{a}E=EA_{a}$ for all $a\in G$.
\end{dfn}

\begin{rmk}
In the above
since $A_1 = \id$, we have $\ro_1x =\ro_1x \cdot \ro_1x$, and hence
$\ro_1x = \id_{Ex}$ for all $x \in \calC$.
Hence the natural requirement $\ro_1 = \id_E$ follows automatically from
the defining condition.
\end{rmk}

\begin{exm}\label{De-mor}
Any $\k$-functor $F\colon\mathcal{B}\to\mathcal{B}'$ defines a
strictly $G$-equivariant functor $\De F:= (F, (\id_F)_{a\in G}) \colon \De\calB \to \De\calB'$.
\end{exm}

\subsection{Morphisms of $G$-equivariant functors}
\begin{dfn}
Let $(E,\rho),(E',\rho')\colon\mathcal{C}\to\mathcal{C}'$ be $G$-\emph{equivariant}
functors. Then a \emph{morphism} from $(E,\rho)$ to $(E',\rho')$
is a natural transformation $\eta\colon E\To E'$ such that
the diagrams
$$
\xymatrix{
A_{a}E & EA_a\\
A_aE' &E'A_a
\ar@{=>}^{\ro_a}"1,1";"1,2"
\ar@{=>}_{\ro'_a}"2,1";"2,2"
\ar@{=>}_{A_a\et}"1,1";"2,1"
\ar@{=>}^{\et A_a}"1,2";"2,2"
}
$$
commute for all $a\in G$.
\end{dfn}
We define a composition of $G$-equivariant functors.
\begin{lem}
\label{lem:eqv-eqv-is-eqv}Let $ $$\xymatrix{\mathcal{C}\ar[r]^{(E,\rho)} & \mathcal{C}'\ar[r]^{(E',\rho')} & \calC''}
$ $ $be $G$-equivariant functors of $G$-categories. Then

$(1)$ $(E'E,((E'\rho_{a})(\rho_{a}'E))_{a\in G})\colon\mathcal{C}\to\mathcal{C}''$
is a $G$-equivariant functor, which we define to be the composite
$(E',\rho')(E,\rho)$ of $(E,\rho)$ and $(E',\rho')$.

$(2)$ If further $(E'',\rho'')\colon\mathcal{C}''\to\mathcal{C}'''$
is a $G$-equivariant functor, then we have \[
((E,\rho)(E',\rho'))(E'',\rho'')=(E,\rho)((E',\rho')(E'',\rho'')).\]
\end{lem}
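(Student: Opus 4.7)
The plan is as follows. For part (1), I will verify that the family $\sigma := ((E'\rho_{a})(\rho'_{a}E))_{a\in G}$ is an equivariance adjuster for $E'E$. Each $\sigma_{a}\colon A_{a}E'E \to E'EA_{a}$ is automatically a natural isomorphism since $\rho_{a}$ and $\rho'_{a}$ are and whiskering by a functor preserves this property. What genuinely needs checking is the cocycle identity $\sigma_{ba} = (\sigma_{b}A_{a})(A_{b}\sigma_{a})$ for $a,b\in G$.

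The strategy is to expand both sides using the cocycle identities $\rho_{ba} = (\rho_{b}A_{a})(A_{b}\rho_{a})$ and $\rho'_{ba} = (\rho'_{b}A_{a})(A_{b}\rho'_{a})$ and compare. Whiskering the first by $E'$ on the left and the second by $E$ on the right, and then composing vertically, one obtains
\[
\sigma_{ba} \;=\; (E'\rho_{b}A_{a})\,(E'A_{b}\rho_{a})\,(\rho'_{b}A_{a}E)\,(A_{b}\rho'_{a}E).
\]
On the other hand, using that whiskering distributes over vertical composition, $(\sigma_{b}A_{a})(A_{b}\sigma_{a})$ expands to
\[
(E'\rho_{b}A_{a})\,(\rho'_{b}EA_{a})\,(A_{b}E'\rho_{a})\,(A_{b}\rho'_{a}E).
\]
After cancelling the outermost factors, what remains is the single identity $(E'A_{b}\rho_{a})(\rho'_{b}A_{a}E) = (\rho'_{b}EA_{a})(A_{b}E'\rho_{a})$, which is precisely the naturality square of $\rho'_{b}\colon A_{b}E' \to E'A_{b}$ evaluated on the natural transformation $\rho_{a}\colon A_{a}E \to EA_{a}$ --- equivalently, the interchange law for the horizontal composite of $\rho'_{b}$ and $\rho_{a}$.

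For part (2), strict associativity of functor composition gives $E''(E'E) = (E''E')E$ on the nose, so only the equivariance adjusters need to be compared. Applying the rule from (1) twice, the adjuster at $a\in G$ obtained from either parenthesization expands, via associativity of whiskering (for instance $E''(\rho'_{a}E) = (E''\rho'_{a})E$), to the same threefold vertical composite $(E''E'\rho_{a})(E''\rho'_{a}E)(\rho''_{a}E'E)$. Hence the two composites coincide as $G$-equivariant functors.

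The only nontrivial step is recognizing in (1) that the comparison collapses to a single interchange/naturality identity between $\rho'_{b}$ and $\rho_{a}$. Once this observation is made, both parts reduce to mechanical diagram chases, with no need to invoke anything beyond the cocycle conditions for $\rho,\rho'$ and the standard naturality calculus of 2-cells in $\mathbf{Cat}$.
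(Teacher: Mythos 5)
Your verification is correct: the expansion of both sides of the cocycle condition is accurate, and the identity $(E'A_{b}\rho_{a})(\rho'_{b}A_{a}E)=(\rho'_{b}EA_{a})(A_{b}E'\rho_{a})$ to which everything reduces is indeed just the naturality of $\rho'_{b}$ applied to the components of $\rho_{a}$, while part (2) reduces to both adjusters equalling $(E''E'\rho_{a})(E''\rho'_{a}E)(\rho''_{a}E'E)$. The paper dismisses this lemma as ``Straightforward,'' and your argument is exactly the direct computation being alluded to, so there is nothing to compare beyond noting that you have supplied the omitted details.
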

\begin{proof}
Straightforward.
\end{proof}

\subsection{2-category $\Cat$}
\begin{dfn}\label{dfn:G-Cat}
A 2-category $G$-$\mathbf{Cat}$ is defined as follows.
\begin{itemize}
\item The objects are the small $G$-categories.
\item The 1-morphisms are the $G$-equivariant functors between objects.
\item The identity 1-morphism of an object $\calC$ is the 1-morphism
$(\id_\calC, (\id_{\id_\calC})_{a \in G})$.
\item The 2-morphisms are the morphisms of $G$-equivariant functors.
\item The identity 2-morphism of a 1-morphism $(E, \ro) \colon \calC \to \calC'$
is the identity natural transformation  $\id_E$ of $E$, which is clearly a 2-morphism.
\item The composition of 1-morphisms is the one given in the previous lemma.
\item The vertical and the horizontal compositions of 2-morphisms are given
by the usual ones of natural transformations.
\end{itemize}
\end{dfn}
\begin{prp}
The data above determine a $2$-category.\end{prp}
\begin{proof}
Straightforward.
\end{proof}

\begin{dfn}
Let $F$ and $F'$ be functors $\calB \to \calB'$ in $\kCat$, and
$\al \colon F \to F'$ a natural transformation.
Then we define a morphism $\De\ep \colon \De F \to \De F'$
of $G$-equivariant functors by setting $\De\ep:= \ep$.
This and the constructions given in Examples \ref{De-obj} and \ref{De-mor}
define a 2-functor $\De \colon \kCat \to \Cat$.
\end{dfn}

\section{The 2-category $G$-$\mathbf{GrCat}$}

In this section we cite necessary definitions and statements from
\cite[\S 5]{Asa-cover} and add new concepts and statements to define
the 2-category of $G$-graded categories. Here we modified the definition
of degree-preserving functors in order to include the functor $H$
(and hence the functors $\om'_\calB$ for all $G$-graded categories $\calB$, see
Definition \ref{dfn:om'})
in Proposition \ref{prp:cov-fun-wrt-smash} below because $H$ is
not degree-preserving in the sense of \cite{Asa-cover} in general
(see \cite[Remark 5.9]{Asa-cover} and Remark \ref{rmk:necessity-weak}).

\begin{dfn}\label{dfn-graded-cat}
$(1)$ A $G$-\emph{graded} $\k$-category is a category $\mathcal{B}$
together with a family of direct sum decompositions $\mathcal{B}(x,y)=\bigoplus_{a\in G}\mathcal{B}^{a}(x,y)$
$(x,y\in\mathcal{B})$ of $\Bbbk$-modules such that $\mathcal{B}^{b}(y,z)\cdot\mathcal{B}^{a}(x,y)\subseteq\mathcal{B}^{ba}(x,z)$
for all $x,y\in\mathcal{B}$ and $a,b\in G$. If $f\in\mathcal{B}^{a}(x,y)$
for some $a\in G$, then we set $\deg f:=a$.

$(2)$ A \emph{degree-preserving} functor is a pair $(H,r)$ of a
$\k$-functor $H\colon\mathcal{B}\to\mathcal{A}$ of $G$-graded categories
and a map $r\colon \mathcal{B}_0\to G$ such that
$$
H(\mathcal{B}^{r_{y}a}(x,y))\subseteq\mathcal{A}{}^{ar_{x}}(Hx,Hy)
$$
(or equivalently $H(\mathcal{B}^{a}(x,y))\subseteq\mathcal{A}{}^{r_{y}\inv ar_{x}}(Hx,Hy)$)
for all $x,y\in\mathcal{B}$ and $a\in G$.
This $r$ is called a \emph{degree adjuster} of $H$.

$(3)$ A $\k$-functor $H\colon\mathcal{B}\to\mathcal{A}$ of $G$-graded
categories is called a \emph{strictly} degree-preserving functor if
$(H,1)$ is a degree-preserving functor, where 1 denotes the constant
map $\mathcal{B}_0\to G$ with value $1\in G$, i.e., if $H(\mathcal{B}^{a}(x,y))\subseteq\mathcal{A}^{a}(Hx,Hy)$
for all $x,y\in\mathcal{B}$ and $a\in G$.

$(4)$ Let $(H,r),(I,s)\colon\mathcal{B}\to\mathcal{A}$ be degree-preserving
functors. Then a natural transformation $\theta\colon H\To I$ is
called a \emph{morphism} of degree-preserving functors if $\theta x\in\mathcal{A}{}^{s_{x}^{-1}r_{x}}(Hx,Ix)$
for all $x\in\mathcal{B}$.
\end{dfn}

The composite of degree-preserving functors can be made into again
a degree-preserving functor as follows.
\begin{lem}
Let ${}$$\xymatrix{\mathcal{B}\ar[r]^{(H,r)} & \mathcal{B}'\ar[r]^{(H',r')} & \mathcal{B}''}
$ be degree-preserving functors. Then \[
(H'H,(r_{x}r'_{Hx})_{x\in\mathcal{B}})\colon\mathcal{B}\to\mathcal{B}''\]
 is also a degree-preserving functor, which we define to be the \emph{composite}
$(H',r')(H,r)$ of $(H,r)$ and $(H',r')$.\end{lem}
\begin{proof}
Straightforward.
\end{proof}

\begin{dfn}\label{dfn:G-GrCat}
A 2-category $G$-$\mathbf{GrCat}$ is defined as follows.
\begin{itemize}
\item The objects are the small $G$-graded categories.
\item The 1-morphisms are the degree-preserving functors between objects.
\item
The identity 1-morphism of an object $\calB$ is the 1-morphism $(\id_\calB, 1)$.
\item The 2-morphisms are the morphisms of degree-preserving functors.
\item
The identity 2-morphism of a 1-morphism $(H, r)\colon \calB \to \calA$
is the identity natural transformation $\id_H$ of $H$, which is a 2-morphism
(because $(\id_{H})x =\id_{Hx} \in \calA^1(Hx, Hx) = \calA^{r_x\inv r_x}(Hx, Hx)$
 for all $x \in \calB$).
\item The composition of 1-morphisms is the one given in the previous lemma.
\item The vertical and the horizontal compositions of 2-morphisms are given
by the usual ones of natural transformations.
\end{itemize}
\end{dfn}
\begin{prp}
The data above determine a $2$-category.\end{prp}
\begin{proof}
Straightforward.
\end{proof}

\section{Covering functors}

Throughout sections 4 and 5, $\mathcal{C}$ is a $G$-category and $\mathcal{B}$
is a $\k$-category. In this section we cite definitions and statements
without proofs from \cite[\S 1]{Asa-cover}.

\subsection{$G$-invariant functors}
\begin{dfn}[{\cite[Definition 1.1]{Asa-cover}}]
A $G$-\emph{invariant} functor from $\calC$ to $\calB$
is a $G$-equivariant functor
$$
(F, \ph) \colon \calC \to \De\calB.
$$
We sometimes write this as $(F, \ph) \colon \calC \to \calB$.
\end{dfn}

\begin{rmk}
In the above the defining condition on $\ph = (\ph_a)_{a\in G}$ becomes as follows:
The diagrams
 \[
\xymatrix{F\ar@{=>}[r]^{\phi_{a}}\ar@{=>}[rd]_{\phi_{ba}} & FA_{a}\ar@{=>}[d]^{\phi_{b}A_{a}}\\
 & FA_{ba}=FA_{b}A_{a}}
\]
commute for all $a,b\in G$.
In particular, this implies
$\phi_{a}^{-1}=\phi_{a^{-1}}A_{a}$ for all
$a\in G$.
\end{rmk}

\subsection{Morphisms of $G$-invariant functors}
\begin{dfn}
\label{dfn:mor-of-inv}Let $(F,\phi),(F',\phi')$ be $G$-invariant
functors $\mathcal{C}\to\mathcal{B}$.
Then a \emph{morphism} of $G$-invariant functors
from $(F, \ph)$ to $(F', \ph')$ is just a morphism $\et$ of $G$-equivariant functors,
namely $\et$ is a natural transformation $F\to F'$ such that
the diagrams
$$
\xymatrix{
F & FA_a\\
F' &F'A_a
\ar@{=>}^{\ph_a}"1,1";"1,2"
\ar@{=>}_{\ph'_a}"2,1";"2,2"
\ar@{=>}_{\et}"1,1";"2,1"
\ar@{=>}^{\et A_a}"1,2";"2,2"
}
$$
commute for all $a\in G$.
\end{dfn}

\begin{ntn}
All $G$-invariant functors $\mathcal{C}\to\mathcal{B}$ and all morphisms
between them form a category, which we denote by $\Inv(\mathcal{C},\mathcal{B})$.
When both $\calC$ and $\calB$ are small categories, we have
$\Inv(\calC, \calB) = \Cat(\calC, \De\calB)$.
\end{ntn}
As a special case of Lemma \ref{lem:eqv-eqv-is-eqv}, the composite
of a $G$-invariant functor and a functor is made into again a $G$-invariant
functor:
\begin{lem}[{\cite[Lemma 1.4]{Asa-cover}}]
\label{lem:compn-of-inv-and-fun}Let $(F,\phi)\colon\mathcal{C}\to\mathcal{B}$
be a $G$-invariant functor and $H\colon\mathcal{B}\to\mathcal{A}$
a functor. Then $(HF,H\phi)\colon\mathcal{C}\to\mathcal{A}$ is again
a $G$-invariant functor, where $H\phi:=(H\phi_{a})_{a\in G}$. We
set $H(F,\phi):=(HF,H\phi)$.
\end{lem}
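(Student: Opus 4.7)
The plan is to verify directly that $H\phi := (H\phi_{a})_{a\in G}$ is an invariance adjuster of $HF$. Alternatively, one may regard $\mathcal{B}$ and $\mathcal{A}$ as $G$-categories with trivial $G$-actions; then $(F,\phi)$ is a $G$-equivariant functor into a trivial-action target and $H$ is a strictly $G$-equivariant functor (with $\rho'_{a}=\id_{H}$), so Lemma~\ref{lem:eqv-eqv-is-eqv}(1) immediately yields the $G$-equivariant composite $(HF,((H\phi_{a})(\id_{H}F))_{a\in G}) = (HF,H\phi)$, which is exactly the asserted $G$-invariant functor. I would present the direct verification for transparency and then remark on this deduction.

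First I would check that each $H\phi_{a}\colon HF\to HFA_{a}$ is a natural isomorphism. Naturality is immediate from the naturality of $\phi_{a}$ and the functoriality of $H$; and since $\phi_{a}$ is a natural isomorphism, each component $\phi_{a,X}$ is invertible in $\mathcal{B}$, hence $H(\phi_{a,X})$ is invertible in $\mathcal{A}$ because functors preserve isomorphisms.

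Next I would check the cocycle condition, namely that for all $a,b\in G$ the triangle
\[
\xymatrix{HF\ar[r]^{H\phi_{a}}\ar[rd]_{H\phi_{ba}} & HFA_{a}\ar[d]^{(H\phi_{b})A_{a}}\\
 & HFA_{ba}=HFA_{b}A_{a}}
\]
commutes. This is obtained by applying $H$ to the analogous triangle for $(F,\phi)$; the only point requiring a line of justification is that $H(\phi_{b}A_{a}) = (H\phi_{b})A_{a}$, which is the standard compatibility of whiskering with functor composition (evaluated at an object $X$, both sides equal $H(\phi_{b,A_{a}X})$).

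There is no real obstacle here: the lemma is a routine whiskering calculation, and the main thing to be careful about is simply keeping the two sides of the cocycle equation aligned and recognising that whiskering by $A_{a}$ on the right commutes with post-composition by $H$ on the left. Once this is in place, the conclusion $(HF,H\phi)\in\Inv(\mathcal{C},\mathcal{A})$ follows, and the abbreviation $H(F,\phi):=(HF,H\phi)$ is well-defined.
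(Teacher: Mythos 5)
Your proof is correct and follows essentially the paper's own route: the paper presents this lemma precisely as a special case of Lemma~\ref{lem:eqv-eqv-is-eqv} (with $\mathcal{B}$ and $\mathcal{A}$ carrying trivial $G$-actions and $H$ strictly $G$-equivariant), which is exactly your second deduction, and your direct whiskering verification is just the routine unwinding of that same argument.
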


\subsection{$G$-covering functors}
\begin{ntn}
Let $(F,\ph)\colon\mathcal{C}\to\mathcal{B}$ be a $G$-invariant
functor and $x,y\in\mathcal{C}$.
Then we define homomorphisms $F_{x,y}^{(1)}:= (F, \ph)_{x,y}^{(1)}$
and $F_{x,y}^{(2)}:= (F, \ph)_{x,y}^{(2)}$ of $\Bbbk$-modules as follows.
\end{ntn}
\begin{eqnarray*}
F_{x,y}^{(1)}\colon\bigoplus_{a\in G}\mathcal{C}(A_{a}x,y) & \to & \mathcal{B}(Fx,Fy),\ (f_{a})_{a\in G}\mapsto\sum_{a\in G}F(f_{a})\cdot\phi_{a}x\\
F_{x,y}^{(2)}\colon\bigoplus_{b\in G}\mathcal{C}(x,A_{b}y) & \to & \mathcal{B}(Fx,Fy),\ (f_{b})_{b\in G}\mapsto\sum_{b\in G}\phi_{b^{-1}}(A_{b}y)\cdot F(f_{b})\end{eqnarray*}

\begin{prp}[{\cite[Proposition 1.6]{Asa-cover}}]
\label{prp:iso-iso}In the above, $F_{x,y}^{(1)}$ is an isomorphism
if and only if $F_{x,y}^{(2)}$ is.
\end{prp}

\begin{dfn}[{\cite[Definition 1.7]{Asa-cover}}]
\label{dfn:G-covering}Let $(F,\phi)\colon\mathcal{C}\to\mathcal{B}$
be a $G$-invariant functor. Then

(1) $(F,\phi)$ is called a $G$-\emph{precovering} if for each $x,y\in\mathcal{C}$,
$F_{x,y}^{(1)}$ is an isomorphisms (the latter is equivalent to saying
that $F_{x,y}^{(2)}$ is an isomorphism by Proposition \ref{prp:iso-iso});

(2) $(F,\phi)$ is called a $G$-\emph{covering} if it is a $G$-precovering
and $F$ is \emph{dense} (i.e., for each $y\in\mathcal{B}$ there
is an $x\in\mathcal{C}$ such that $Fx\cong y$ in $\mathcal{B}$).

\end{dfn}

\section{Orbit categories}

In this section we cite necessary definitions and statements without
proofs from \cite[\S 2]{Asa-cover} except for \S\ref{sub:eqv-inv-is-eqv}.
The symbol $\de_{a,b}$ stands for the Kronecker's delta below.

\subsection{Canonical $G$-covering}

\begin{dfn}[{\cite[Definition 2.1]{Asa-cover}}]
\label{dfn:orbit-category}The orbit category $\mathcal{C}/G$ of
$\mathcal{C}$ by $G$ is a category defined as follows.
\begin{itemize}
\item $(\mathcal{C}/G)_0:=\mathcal{C}_0.$
\item For each $x,y\in\mathcal{C}/G$, $(\mathcal{C}/G)(x,y)$ is the set
of all $f=(f_{b,a})\in\prod_{(a,b)\in G\times G}\mathcal{C}(A_{a}x,A_{b}y)$
$ $ such that $f$ is row finite and column finite and that $f_{cb,ca}=A_{c}f_{b,a}$
for all $c\in G$. 
\item For any pair $f\colon x\to y$ and $g\colon y\to z$ in $\mathcal{C}/G$,
$gf:=\left(\sum_{c\in G}g_{b,c}f_{c,a}\right)_{(a,b)}$ .
\end{itemize}
Then $\mathcal{C}/G$ becomes a category where the identity $\id_{x}$
of each $x\in\mathcal{C}/G$ is given by $\id_{x}=(\delta_{a,b}\id_{A_{a}x})_{(a,b)}$.

\end{dfn}
{}
\begin{dfn}[{\cite[Definition 2.4]{Asa-cover}}]
We define a functor $P_{\mathcal{C},G}:=P\colon\mathcal{C}\to\mathcal{C}/G$
as follows.
\begin{itemize}
\item For each $x\in\mathcal{C}$, $P(x):=x$;
\item For each morphism $f$ in $\mathcal{C}$, $P(f):=(\delta_{a,b}A_{a}f)_{(a,b)}$.
\end{itemize}
Then $P$ turns out to be a functor.

\end{dfn}
{}
\begin{dfn}[{\cite[Definition 2.5]{Asa-cover}}]
For each $c\in G$ and $x\in\mathcal{C}$, set $\ps_{c}x:=(\delta_{a,bc}\id_{A_{a}x})_{(a,b)}\in(\mathcal{C}/G)(Px,PA_{c}x)$.
Then $\ps_{c}:=(\ps_{c}x)_{x\in\mathcal{C}}\colon P\to PA_{c}$ is
a natural isomorphism, and the pair 
$(P_{\mathcal{C},G},\ps_{\mathcal{C},G}):=(P,\ps)\colon\mathcal{C}\to\mathcal{C}/G$
turns out to be a $G$-invariant functor, where we set 
 $\ps_{\mathcal{C},G}:=\ps:=(\ps_{c})_{c\in G}$.
We call $(P,\ps)$ the \emph{canonical} functor.
\end{dfn}

\begin{prp}[{\cite[Proposition 2.6]{Asa-cover}}]
\label{prp:univ-orbit}The following statements hold:
\begin{enumerate}
\item $(P,\ps)$ is a $G$-covering functor;
\item $(P,\ps)$ is \emph{universal} among $G$-invariant functors from
$\mathcal{C}$, i.e., for any $G$-invariant functor $(F,\phi)\colon\mathcal{C}\to\mathcal{B}$
there exists a unique functor $H\colon\mathcal{C}/G\to\mathcal{B}$
such that $(F,\phi)=H(P,\psi)$ as $G$-invariant functors.
\end{enumerate}
\end{prp}

\begin{cor}[{\cite[Corollary 2.7]{Asa-cover}}]
\label{cor:2-univ-P}
In the above, $(P,\ps)$ is $2$-universal, i.e., the induced functor
\[
(P,\ps)^{*}\colon\Fun(\mathcal{C}/G,\mathcal{B})\to\Inv(\mathcal{C},\mathcal{B})
\]
is an isomorphism of categories, where $\Fun(\mathcal{C}/G,\mathcal{B})$
is the category of $\k$-functors from $\mathcal{C}/G$ to $\mathcal{B}$.
\end{cor}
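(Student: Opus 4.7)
The plan is to unpack what the functor $(P,\ps)^{*}$ does and reduce the corollary to two checks on top of the previous proposition. Recall that $(P,\ps)^{*}$ sends a functor $H\colon\mathcal{C}/G\to\mathcal{B}$ to $(HP,H\ps)$ and sends a natural transformation $\eta\colon H\to H'$ to the whiskered transformation $\eta P\colon HP\to H'P$ (which is automatically a morphism of $G$-invariant functors because $H$ and $H'$ both respect $\ps$). Bijectivity on objects is exactly the content of part~(2) of the previous proposition. The real work is to show bijectivity on morphism sets, i.e.\ that for any two $H,H'\colon\mathcal{C}/G\to\mathcal{B}$ and any morphism $\xi\colon(HP,H\ps)\to(H'P,H'\ps)$ in $\Inv(\mathcal{C},\mathcal{B})$ there is a unique natural transformation $\eta\colon H\to H'$ with $\eta P=\xi$.

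For uniqueness, note that $Px=x$ on objects, so the requirement $\eta P=\xi$ forces $\eta_{x}=\xi_{x}$ for every $x\in\obj(\mathcal{C})=\obj(\mathcal{C}/G)$; this defines $\eta$ on objects, and at most one natural transformation can extend it. For existence, I would take this forced definition $\eta_{x}:=\xi_{x}$ and verify naturality with respect to every morphism of $\mathcal{C}/G$. The key is that $(P,\ps)$ is a $G$-covering, so by Definition~\ref{dfn:G-covering} the map
\[
P^{(1)}_{x,y}\colon\bigoplus_{a\in G}\mathcal{C}(A_{a}x,y)\to(\mathcal{C}/G)(x,y),\quad (f_{a})_{a}\mapsto\sum_{a\in G}P(f_{a})\cdot\ps_{a}x
\]
is an isomorphism. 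Therefore it suffices to verify naturality of $\eta$ for morphisms of the special form $P(g)\cdot\ps_{a}x$ with $g\colon A_{a}x\to y$ in $\mathcal{C}$.

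For such a morphism the naturality square to be checked is
\[
\eta_{y}\cdot H(P(g)\cdot\ps_{a}x)=H'(P(g)\cdot\ps_{a}x)\cdot\eta_{x},
\]
which after expanding $H(P(g)\ps_{a}x)=HP(g)\cdot H\ps_{a}x$ reduces to
\[
\xi_{y}\cdot HP(g)\cdot H\ps_{a}x=H'P(g)\cdot H'\ps_{a}x\cdot\xi_{x}.
\]
This follows in two steps: naturality of $\xi$ as a natural transformation $HP\to H'P$ moves $\xi_{y}$ past $HP(g)$ to give $H'P(g)\cdot\xi_{A_{a}x}\cdot H\ps_{a}x$, and the defining condition of a morphism of $G$-invariant functors (Definition~\ref{dfn:mor-of-inv}) applied to $\xi$ at $x$ with $a\in G$ gives $\xi_{A_{a}x}\cdot H\ps_{a}x=H'\ps_{a}x\cdot\xi_{x}$, yielding the right-hand side.

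The main obstacle is simply organising this last naturality verification cleanly: one must use both the ordinary naturality of $\xi$ (over morphisms of $\mathcal{C}$) and the equivariance-type compatibility of $\xi$ with the invariance adjusters, and one must invoke the $G$-covering property to know it is enough to test on the generators $P(g)\ps_{a}x$. Once the naturality is established, functoriality of $\eta\mapsto\eta P$ on composition and identities is immediate, so $(P,\ps)^{*}$ is fully faithful; combined with the bijection on objects, this makes it an isomorphism of categories.
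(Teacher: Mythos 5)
Your proof is correct, and it is the standard argument: the paper itself states this corollary without proof (citing \cite{Asa-cover}), so there is nothing to diverge from. Your reduction of faithfulness/fullness to objectwise agreement via $Px=x$, and the verification of naturality on the generators $P(g)\cdot\ps_{a}x$ using $\Bbbk$-linearity, the surjectivity of $P^{(1)}_{x,y}$, ordinary naturality of $\xi$, and Definition~\ref{dfn:mor-of-inv}, is exactly what is needed.
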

This will be used later in \S \ref{dfn:2-fun-?/G}.

\begin{lem}[{\cite[Lemma 5.4]{Asa-cover}}]
$\calC/G$ is $G$-graded.
\end{lem}

Recall the definition of $G$-grading of $\calC/G$:
Let $(P,\psi)\colon\mathcal{C}\to\mathcal{C}/G$ be the canonical functor.
Then the $G$-grading is given by
$(\calC/G)(x,y) = \Ds_{a\in G}(\calC/G)^a(x,y)$,
where
\begin{equation}
\label{eq:grading-C/G}
(\calC/G)^a(x,y):= P_{x,y}^{(1)}(\calC(A_ax, y))
\end{equation}
for all $x, y \in \calC$ and $a \in G$.
Further \cite[Remark 5.5]{Asa-cover} says that
for each $x, y \in \calC$, $a \in G$, and
$f \in (\calC/G)(x, y)$ we have $f \in (\calC/G)^a(x, y)$
if and only if $f_{c,b} = 0$ whenever $c\inv b \ne a$.

\begin{rmk}
In Corollary \ref{cor:2-univ-P} if both $\calC$ and $\calB$ are small categories, then
the corollary above gives us an isomorphism of categories
\[
(P,\ps)^{*}\colon\kCat(\mathcal{C}/G,\mathcal{B})\to\Cat(\mathcal{C},\De\mathcal{B}).
\]
In Lemma \ref{2-fun-/G} we will define a 2-functor $?/G \colon \Cat \to \GrCat$.
If we consider the composite 2-functor $\Fgt \circ (?/G) \colon \Cat \to \kCat$,
where $\Fgt \colon \GrCat \to \kCat$ is the forgetful functor,
we see that the isomorphism above is 2-natural in $\calC$ and in $\calB$.
This means that $\Fgt \circ (?/G)$ is a left adjoint to $\De$.
\end{rmk}

\subsection{Characterization of $G$-covering functors}

The following gives a characterization of $G$-covering functors.

\begin{thm}[{\cite[Theorem 2.9]{Asa-cover}}]
Let $(F,\phi)\colon\mathcal{C}\to\mathcal{B}$ be a $G$-invariant
functor. Then the following are equivalent.
\begin{enumerate}
\item $(F,\phi)$ is a $G$-covering;
\item $(F,\phi)$ is a $G$-precovering that is universal among $G$-precovering
from $\mathcal{C}$;
\item $(F,\phi)$ is universal among $G$-invariant functors from $\mathcal{C}$;
\item There exists an equivalence $H\colon\mathcal{C}/G\to\mathcal{B}$
such that $(F,\phi)\cong H(P,\psi)$ as $G$-invariant functors; and
\item There exists an equivalence $H\colon\mathcal{C}/G\to\mathcal{B}$
such that $(F,\phi)=H(P,\psi)$.
\end{enumerate}
\end{thm}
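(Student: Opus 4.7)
The structure is a double cycle anchored at the canonical $G$-covering $(P,\psi)$, whose 2-universality (the Corollary above) does the heavy lifting. I would prove (5) $\Rightarrow$ (4) $\Rightarrow$ (1) $\Rightarrow$ (5), and separately (5) $\Rightarrow$ (3) $\Rightarrow$ (2) $\Rightarrow$ (5).

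(5) $\Rightarrow$ (4) is trivial. For (4) $\Rightarrow$ (1), the $G$-covering property is invariant under isomorphism of $G$-invariant functors, so it suffices to show that $H(P,\psi)$ is a $G$-covering whenever $H\colon \mathcal{C}/G \to \mathcal{B}$ is an equivalence: writing $F = HP$ and $\phi_a = H\psi_a$, a direct unwinding of the definitions gives the identity $F_{x,y}^{(1)} = H_{Px,Py}\circ P_{x,y}^{(1)}$, an isomorphism since $(P,\psi)$ is a $G$-precovering and $H$ is fully faithful; density of $F$ follows from density of $H$, as $P$ is the identity on objects. For (1) $\Rightarrow$ (5), 2-universality of $(P,\psi)$ supplies a unique $H\colon \mathcal{C}/G \to \mathcal{B}$ with $(F,\phi) = H(P,\psi)$, and the same identity $F_{x,y}^{(1)} = H_{Px,Py}\circ P_{x,y}^{(1)}$ now forces $H_{Px,Py}$ to be an isomorphism (both outer maps are isomorphisms by hypothesis); since $P$ is the identity on objects, $H$ is fully faithful, and density of $F$ yields density of $H$, so $H$ is an equivalence.

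For the universality loop, (5) $\Rightarrow$ (3) transfers the 2-universality of $(P,\psi)$ across the equivalence $H$: given $(G,\chi) \in \Inv(\mathcal{C},\mathcal{A})$, factor $(G,\chi) = K(P,\psi)$ by the Corollary, then set $L := KH'$ for a chosen quasi-inverse $H'$ of $H$, so that $L(F,\phi) \cong (G,\chi)$ with uniqueness up to natural isomorphism. The implication (3) $\Rightarrow$ (2) is immediate, since every $G$-precovering is a $G$-invariant functor. For (2) $\Rightarrow$ (5), apply the universal property of (2) to the $G$-precovering $(P,\psi)$ to produce $K\colon \mathcal{B}\to\mathcal{C}/G$ with $(P,\psi) \cong K(F,\phi)$, and combine with the strict factorization $(F,\phi) = H(P,\psi)$ furnished by the Corollary; this yields $HK(F,\phi) \cong (F,\phi)$ and $KH(P,\psi) \cong (P,\psi)$, and the corresponding uniqueness clauses force $HK \cong \id_{\mathcal{B}}$ and $KH \cong \id_{\mathcal{C}/G}$, so $H$ is an equivalence.

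The main obstacle will be reconciling the \emph{strict} universality of $(P,\psi)$ (packaged as an isomorphism of categories in the Corollary) with the fact that the factoring functor $H$ of (5) is in general only an equivalence of categories, so that the universal properties in (2) and (3) must be read up to natural isomorphism. Once this bookkeeping is set up, the entire argument reduces to the single identity $F_{x,y}^{(1)} = H_{Px,Py}\circ P_{x,y}^{(1)}$ together with routine manipulations of equivalences and quasi-inverses.
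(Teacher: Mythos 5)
The paper itself contains no proof of this theorem: Section 3 states explicitly that its results are quoted without proof from the earlier preprint (arXiv:0807.4706), so there is no internal argument to compare against. That said, your overall strategy --- two cycles through (5), anchored on the 2-universality of $(P,\psi)$ and the single identity $F^{(1)}_{x,y}=H_{Px,Py}\circ P^{(1)}_{x,y}$ --- is the natural one, and your arguments for (5)$\Rightarrow$(4)$\Rightarrow$(1)$\Rightarrow$(5), for (5)$\Rightarrow$(3), and for (2)$\Rightarrow$(5) are sound. (The invariance of the precovering property under isomorphism of $G$-invariant functors, which (4)$\Rightarrow$(1) relies on, does require the small computation combining naturality of the comparison isomorphism with its compatibility with the adjusters, but it goes through.)

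The one genuine gap is the claim that (3)$\Rightarrow$(2) is ``immediate, since every $G$-precovering is a $G$-invariant functor.'' That observation only yields the universality of $(F,\phi)$ among $G$-precoverings; it does not yield the other half of (2), namely that $(F,\phi)$ \emph{is} a $G$-precovering, and that is not a formal consequence of universality. To obtain it you must first run the comparison argument: (3) together with the universality of $(P,\psi)$ produces mutually quasi-inverse $H$ and $K$ with $(F,\phi)\cong H(P,\psi)$, i.e.\ statement (4), and then (4)$\Rightarrow$(1) supplies the precovering property. So (3)$\Rightarrow$(2) must be routed through (4) or (1); as written, your second cycle does not close. Relatedly, you correctly flag the strict-versus-weak universality bookkeeping as the delicate point, but it deserves to be resolved rather than deferred: the clean mechanism is that $(P,\psi)^{*}\colon\Fun(\mathcal{C}/G,\mathcal{A})\to\Inv(\mathcal{C},\mathcal{A})$ is an \emph{isomorphism of categories}, so an isomorphism $KH(P,\psi)\cong(P,\psi)$ in $\Inv(\mathcal{C},\mathcal{C}/G)$ transports to $KH\cong\id_{\mathcal{C}/G}$ by full faithfulness of $(P,\psi)^{*}$ --- this is strictly more than the uniqueness clause of the 1-dimensional universal property --- and the universality asserted in (2) and (3) must likewise be read with the corresponding 2-dimensional (unique-up-to-unique-isomorphism) clause for your cancellation arguments to be legitimate.
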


\subsection{Other isomorphic forms of orbit categories}

The orbit category constructed in Definition \ref{dfn:orbit-category}
has the form of a {}``subset of the product'', which seems not to
match its universality, but it is essentially a left-right symmetrized
direct sum as stated below. (Note that the direct sum of modules were
also constructed as a {}``subset of the direct product''.)
\begin{dfn}[Cibils-Marcos, Keller]
(1) An orbit category $\C\fstorbit G$ is defined as follows.
\begin{itemize}
\item $(\C\fstorbit G)_0:=\C_0$;
\item For any $x,y\in G$, $\C\fstorbit G(x,y):=\Ds_{\al\in G}\C(\al x,y)$;
and
\item For any $x\ya{f}y\ya{g}z$ in $\C\fstorbit G$, $gf:=(\sum_{\al,\be\in G;\be\al=\mu}g_{\be}\cdot\be(f_{\al}))_{\mu\in G}$.
\end{itemize}
(2) Similarly another orbit category $\C\sndorbit G$ is defined as
follows.
\begin{itemize}
\item $(\C\sndorbit G)_0:=\C_0$;
\item For any $x,y\in G$, $(\C\sndorbit G)(x,y):=\Ds_{\be\in G}\C(x,\be y)$;
and
\item For any $x\ya{f}y\ya{g}z$ in $\C\sndorbit G$, $gf:=(\sum_{\al,\be\in G;\al\be=\mu}\al(g_{\be})\cdot f_{\al})_{\mu\in G}$.
\end{itemize}
\end{dfn}
Note that $\C\sndorbit G=(\C\op\fstorbit G)\op$.
\begin{prp}[{\cite[Proposition 2.11]{Asa-cover}}]
\label{prp:three-orbit-cats} We have isomorphisms of categories
$\C\fstorbit G\iso\C/G\iso\C\sndorbit G$.\qed
\end{prp}

\subsection{Composition of a $G$-equivariant functor and a $G$-invariant functor\label{sub:eqv-inv-is-eqv}}

As a special case of Lemma \ref{lem:eqv-eqv-is-eqv}, the composite
of a $G$-equivariant functor and a $G$-invariant functor can be
made into a $G$-invariant functor as follows.
\begin{lem}
\label{lem:equi-and-inv-is-inv}$(1)$ Let $\xymatrix{\mathcal{C}'\ar[r]^{(E,\rho)} & \mathcal{C}\ar[r]^{(F,\phi)} & \mathcal{B}}
$ be functors with $\mathcal{C},\mathcal{C}'$ $G$-categories, $(E,\rho)$
$G$-equivariant and $(F,\phi)$ $G$-invariant. Then \[
(FE,((F\rho_{a})(\phi_{a}E))_{a\in G})\colon\mathcal{C}'\to\mathcal{B}\]
 is a $G$-invariant functor, which we define to be the composite
$(F,\phi)(E,\rho)$ of $(E,\rho)$ and $(F,\phi)$.

$(2)$ In the above if $(E,\rho)$ is a $G$-equivariant equivalence
and $(F,\phi)$ is a $G$-covering functor, then the composite $(F,\phi)(E,\rho)$
is a $G$-covering functor, and hence $\mathcal{C}'/G$ is equivalent
to $\mathcal{B}$.\end{lem}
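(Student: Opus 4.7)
Part (1) should be essentially a corollary of Lemma~\ref{lem:eqv-eqv-is-eqv}. The idea is to regard $\mathcal{B}$ as a $G$-category with the trivial $G$-action, so that a $G$-invariant functor is the same datum as a $G$-equivariant functor into a trivially acted category. Then the composition formula from Lemma~\ref{lem:eqv-eqv-is-eqv}, with $\rho'_a$ replaced by $\phi_a$ and the target's $A_a$ replaced by the identity, specializes to $(FE,((F\rho_a)(\phi_a E))_{a\in G})$, and the cocycle diagram for an invariance adjuster is just the cocycle diagram for an equivariance adjuster in this trivial case. So Part (1) reduces to checking that the family $((F\rho_a)(\phi_a E))_{a\in G}$ really is an invariance adjuster, which is built into Lemma~\ref{lem:eqv-eqv-is-eqv}(1).

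For Part (2) I would verify the two defining conditions of a $G$-covering for $(FE,((F\rho_a)(\phi_a E))_{a\in G})$. Density is immediate: an equivalence is dense, so $E\colon\mathcal{C}'\to\mathcal{C}$ is dense, and $F$ is dense as a $G$-covering, hence $FE$ is dense. The heart of the argument is the $G$-precovering property. Writing out $(FE)^{(1)}_{x,y}$ explicitly,
\[
(FE)^{(1)}_{x,y}\bigl((f_a)_a\bigr)=\sum_{a\in G}F\bigl(E(f_a)\cdot(\rho_a)_x\bigr)\cdot(\phi_a)_{Ex},
\]
one sees a natural factorization $(FE)^{(1)}_{x,y}=F^{(1)}_{Ex,Ey}\circ T_{x,y}$, where
\[
T_{x,y}\colon\bigoplus_{a\in G}\mathcal{C}'(A_ax,y)\to\bigoplus_{a\in G}\mathcal{C}(A_aEx,Ey),\quad (f_a)_a\mapsto\bigl(E(f_a)\cdot(\rho_a)_x\bigr)_a.
\]
Here $F^{(1)}_{Ex,Ey}$ is an isomorphism because $(F,\phi)$ is a $G$-covering, so it suffices to show $T_{x,y}$ is an isomorphism. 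On each $a$-summand, $T_{x,y}$ is the composite of the bijection $E\colon\mathcal{C}'(A_ax,y)\xrightarrow{\sim}\mathcal{C}(EA_ax,Ey)$ (full faithfulness of the equivalence $E$) with precomposition by the isomorphism $(\rho_a)_x\colon A_aEx\xrightarrow{\sim}EA_ax$, hence is bijective; taking the direct sum yields the desired isomorphism.

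Combining these, $(FE,((F\rho_a)(\phi_a E))_{a\in G})$ is a $G$-covering functor from $\mathcal{C}'$ to $\mathcal{B}$. The final conclusion, $\mathcal{C}'/G\simeq\mathcal{B}$, then follows from the characterization theorem of $G$-covering functors recalled in \S3.2 (equivalence of conditions (1) and (4) there), which produces an equivalence $\mathcal{C}'/G\to\mathcal{B}$. The main obstacle I anticipate is purely bookkeeping: keeping the whiskerings $F\rho_a$ and $\phi_aE$ on the correct side and checking that the factorization $(FE)^{(1)}_{x,y}=F^{(1)}_{Ex,Ey}\circ T_{x,y}$ really holds on the nose, which is where any sign, or rather order-of-composition, slip would occur. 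Once that identity is verified the rest is formal.
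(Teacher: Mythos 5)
Your proof is correct. For part (1) you take exactly the paper's route: the paper's entire proof of (1) is the one-line observation that it follows from Lemma~\ref{lem:eqv-eqv-is-eqv}, which is precisely your reduction via the trivial $G$-action on $\mathcal{B}$. For part (2) the paper gives no argument at all in this text --- it simply cites the proof of Lemma~4.10 of the reference \cite{Asa-cover} --- whereas you supply a complete direct verification. Your factorization $(FE)^{(1)}_{x,y}=F^{(1)}_{Ex,Ey}\circ T_{x,y}$ checks out: with $\sigma_a:=(F\rho_a)(\phi_a E)$ one has $\sigma_a x=F(\rho_a x)\cdot\phi_a(Ex)$, so $(FE)^{(1)}_{x,y}\bigl((f_a)_a\bigr)=\sum_a F\bigl(E(f_a)\cdot\rho_a x\bigr)\cdot\phi_a(Ex)$, which is exactly $F^{(1)}_{Ex,Ey}$ applied to $\bigl(E(f_a)\cdot\rho_a x\bigr)_a$; and $T_{x,y}$ is summandwise the composite of the bijection given by full faithfulness of $E$ with precomposition by the isomorphism $\rho_a x$, hence an isomorphism. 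Density of $FE$ and the final appeal to the characterization theorem (the equivalence of (1) and (4) in \S 3.2) are likewise correct. The only thing your write-up buys that the paper does not provide here is self-containedness; conversely, the paper's citation presumably covers the same computation in the earlier reference.
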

\begin{proof}
(1) This follows from Lemma \ref{lem:eqv-eqv-is-eqv}.

(2) This is shown in the proof of \cite[Lemma 4.10]{Asa-cover}.
\end{proof}

\section{Smash products}

In this section we cite necessary definitions and statements from
\cite[\S 5]{Asa-cover} without proofs.

\begin{dfn}[{\cite[Definition 5.2]{Asa-cover}}]
Let $\mathcal{B}$ be a $G$-graded category. Then the \emph{smash
product} $\mathcal{B}\#G$ is a category defined as follows.
\begin{itemize}
\item $(\mathcal{B}\#G)_0:=\mathcal{B}_0\times G$, we set $x^{(a)}:=(x,a)$
for all $x\in\mathcal{B}$ and $a\in G$.
\item $(\mathcal{B}\#G)(x^{(a)},y^{(b)}):=\mathcal{B}^{b^{-1}a}(x,y)$ for
all $x^{(a)},y^{(b)}\in\mathcal{B}\#G$.
\item For any $x^{(a)},y^{(b)},z^{(c)}\in\mathcal{B}\#G$ the composition
is given by the following commutative diagram\[
\begin{CD}(\mathcal{B}\#G)(y^{(b)},z^{(c)})\times(\mathcal{B}\#G)(x^{(a)},y^{(b)})@>>>(\mathcal{B}\#G)(x^{(a)},z^{(c)})\\
@\vert@\vert\\
\mathcal{B}^{c^{-1}b}(y,z)\times\mathcal{B}^{b^{-1}a}(x,y)@>>>\mathcal{B}^{c^{-1}a}(x,z),\end{CD}\]
where the lower horizontal homomorphism is given by the composition of
$\mathcal{B}$.
\end{itemize}
\end{dfn}

\begin{lem}[The first part of {\cite[Proposition 5.6]{Asa-cover}}]
$\mathcal{B}\#G$ has a free $G$-action.
\end{lem}
Recall the definition of the free $G$-action on $\mathcal{B}\#G$:
For each $c\in G$ and $x^{(a)}\in\mathcal{B}\#G$, $A_{c}x^{(a)}:=x^{(ca)}$.
For each $f\in(\mathcal{B}\#G)(x^{(a)},y^{(b)})=\mathcal{B}^{b^{-1}a}(x,y)=(\mathcal{B}\#G)(x^{(ca)},y^{(cb)})$,
$A_{c}f:=f$.
\begin{dfn}[{\cite[Definition 5.7]{Asa-cover}}]
Let $\mathcal{B}$ be a $G$-graded category. Then we define a functor
$Q_{\mathcal{B},G}:=Q\colon\mathcal{B}\#G\to\mathcal{B}$ as follows. 
\begin{itemize}
\item $Q(x^{(a)})=x$ for all $x^{(a)}\in\mathcal{B}\#G$ .
\item $Q(f):=f$ for all $f\in(\mathcal{B}\#G)(x^{(a)},y^{(b)})=\mathcal{B}^{b^{-1}a}(x,y)$
.
\end{itemize}
\end{dfn}

\begin{prp}[{\cite[Proposition 5.8, Remark 5.9]{Asa-cover}}]
\label{prp:cov-fun-wrt-smash}$Q=QA_{a}$ for all $a\in G$ and $Q=(Q,\id)\colon\mathcal{B}\#G\to\mathcal{B}$
is a $G$-covering functor. Hence in particular, $Q$ factors through
the canonical $G$-covering functor $(P,\psi)\colon\mathcal{B}\#G\to(\mathcal{B}\#G)/G$,
i.e., there exists a unique equivalence $H\colon(\mathcal{B}\#G)/G\to\mathcal{B}$
such that $Q=H(P,\psi)$.
\end{prp}

\section{2-functors}
\subsection{Orbit 2-functor}

We first extend the orbit category construction to a 2-functor
$\Cat \to \GrCat$.
\begin{dfn}\label{dfn:2-fun-?/G}
Let $(E,\rho),(E',\rho')\colon\mathcal{C}\to\mathcal{C}'$
be 1-morphisms and $\eta\colon(E,\rho)\to(E',\rho')$ a 2-morphism in $\Cat$.
Set $(P,\psi)\colon\mathcal{C}\to\mathcal{C}/G$, $(P',\psi')\colon\mathcal{C}'\to\mathcal{C}'/G$
to be the canonical functors. By Proposition \ref{lem:equi-and-inv-is-inv}
we have $(P',\psi')\eta\colon(P',\psi')(E,\rho)\to(P',\psi')(E',\rho')$
is in $\Inv(\mathcal{C},\mathcal{C}'/G)$. Then using the isomorphism
$(P,\psi)^{*}\colon\Fun(\mathcal{C}/G,\mathcal{C}'/G)\to\Inv(\mathcal{C},\mathcal{C}'/G)$
of categories we can define
\begin{eqnarray*}
(E,\rho)/G & := & {(P,\psi)^{*}}^{-1}((P',\psi')(E,\rho))\text{ and}\\
\eta/G & := & {(P,\psi)^{*}}^{-1}((P'\psi')\eta).
\end{eqnarray*}
This construction is visualized in the following diagram: \[ 
\xymatrix@C=9em@R=9em{
\calC \rtwocell^{(E,\rho)}_{(E',\rho')}{\eta}& \calC'\\
\calC/G \rtwocell^{(E,\rho)/G}_{(E',\rho')/G}{\ \ \ \eta/G} & \calC'/G.
\ar_{(P,\psi)} "1,1";"2,1" 
\ar^{(P',\psi')} "1,2";"2,2" 
\ar@/^1.5pc/^{(P',\psi')(E,\rho)} "1,1";"2,2" \ar@/_1.4pc/_{(P',\psi')(E',\rho')} "1,1";"2,2" \ar@{=>}^{(P',\psi')\eta} "1,2"+<-5.6em,-4.9em>;"2,1"+<5.2em,4.8em> 
}
\] The explicit form of $\eta/G$ is given by\[
(\eta/G)Px:=P'(\eta x)\in(\mathcal{C}'/G)^{1}(((E,\rho)/G)Px,((E',\rho')/G)Px)\]
(for $(\mathcal{C}'/G)^{1}$ see \eqref{eq:grading-C/G}) for all $x\in\mathcal{C}$. Then as easily seen, $(E,\rho)/G$ is
a strictly degree-preserving functor and $\eta/G$ is a 2-morphism in
$\GrCat$.
\end{dfn}

\begin{lem}\label{2-fun-/G}
The definition above extends the orbit category construction to a
$2$-functor \[
?/G\colon G\text{-}\mathbf{Cat}\to G\text{-}\mathbf{GrCat}.\]
\end{lem}
\begin{proof}
(1) $\id_{\mathcal{C}}/G=\id_{\mathcal{C}/G}$ for all $\mathcal{C}\in\Cat$. 

Indeed, let $(P,\psi)\colon\mathcal{C}\to\mathcal{C}/G$ be the canonical
functor. Then this follows from the following strict commutative diagram:\[
\xymatrix{\calC\ar[r]^{\id_{\calC}}\ar[d]_{(P,\ps)} & \calC\ar[d]^{(P,\ps)}\\
\calC/G\ar[r]_{\id_{\calC/G}} & \calC/G.}
\]

(2) For any $\mathcal{C}\xrightarrow{(E,\rho)}\mathcal{C}'\xrightarrow{(E',\rho')}\mathcal{C}''$
in $\Cat$, $((E',\rho')\cdot(E,\rho))/G=(E',\rho')/G\cdot(E,\rho)/G$.

Indeed, let $(P,\psi)\colon\mathcal{C}\to\mathcal{C}/G$, $(P',\psi')\colon\mathcal{C}'\to\mathcal{C}'/G$,
$(P'',\psi'')\colon\mathcal{C}''\to\mathcal{C}''/G$ be the canonical
functor. We can set $(E,\rho)/G=(H,1)\colon\mathcal{C}/G\to\mathcal{C}'/G$
and $(E',\rho')/G=(H',1)\colon\mathcal{C}'/G\to\mathcal{C}''/G$.
Then we have the following strictly commutative diagram consisting
of solid arrows:$$
\xymatrix{
\calC  & \calC'  & \calC''\\
\calC/G  & \calC'/G & \calC''/G.
\ar_{(E,\rho)} "1,1"; "1,2" 
\ar_{(E',\rho')} "1,2"; "1,3" 
\ar_{(P,\ps)} "1,1"; "2,1" 
\ar^{(P', \ps')} "1,2"; "2,2" 
\ar^{(P'', \ps'')} "1,3"; "2,3" 
\ar^{H} "2,1"; "2,2" 
\ar^{H'} "2,2"; "2,3" 
\ar@{..>}@/^1pc/^{(E'E,\rho'')} "1,1"; "1,3" 
\ar@{..>}@/_1pc/_{H'H} "2,1"; "2,3"
}
$$
Comparing the second entries of $G$-invariant functors this implies the following for
all $a\in G$:
\begin{eqnarray}
(P'\rho_{a})(\psi'_{a}E) & = & H\psi_{a}\label{eq:left-sq}\\
(P''\rho'_{a})(\psi''_{a}E') & = & H'\psi'_{a}\label{eq:right-sq}
\end{eqnarray}
Set $(E'E,\rho''):=(E',\rho')\cdot(E,\rho)$, namely, $\rho'':=((E'\rho_{a})(\rho'_{a}E))_{a\in G}$.
Then the two triangles consisting of dotted arrows and horizontal
arrows are strictly commutative. This shows the strict commutativity
of the following as a diagram of functors:\begin{equation}
\xymatrix{\calC & \calC''\\
\calC/G & \calC''/G,\ar_{(P,\ps)}"1,1";"2,1"\ar^{(P'',\ps'')}"1,2";"2,2"\ar^{(E'E,\rho'')}"1,1";"1,2"\ar_{H'H}"2,1";"2,2"}
\label{eq:comm-outer-sq}\end{equation}
i.e., we have $P''E'E=H'HP$. We have to verify that this is strictly
commutative as a diagram of $G$-invariant functors, i.e., that the
following holds: \[
(P'',\psi'')\cdot(E'E,\rho'')=H'H\cdot(P,\psi).\]
Looking at the second entries of $G$-invariant functors it is enough to show the following
for all $a\in G$:\begin{equation}
(P''\rho''_{a})(\psi''_{a}E'E)=H'H\psi_{a}.\label{eq:outer-sq}\end{equation}
From \eqref{eq:left-sq} the composition with $H'$ on the left yields\[
(H'P'\rho_{a})(H'\psi'_{a}E)=H'H\psi_{a}.\]
From \eqref{eq:right-sq} the composition with $E$ on the right yields\[
(P''\rho'_{a}E)(\psi''_{a}E'E)=H'\psi'_{a}E.\]
Using these equalities we see that the left hand side of \eqref{eq:outer-sq}
is equal to\begin{eqnarray*}
(P''E'\rho_{a})(P''\rho'_{a}E)(\psi'_{a}E'E) & = & (P''E'\rho'_{a})(H'\psi'_{a}E)\\
 & = & (H'P'\rho_{a})(H'\psi'_{a}E)\\
 & = & H'H\psi_{a},\end{eqnarray*}
the right hand side, and the strict commutativity of \eqref{eq:comm-outer-sq}
as a diagram of $G$-invariant functors is verified, which shows that
$((E',\rho')(E,\rho))/G=H'H=(E',\rho')/G\cdot(E,\rho)/G$.

(3) $\id_{(E,\rho)}/G=\id_{(E,\rho)/G}$ for all 1-morphism $(E,\rho)\colon\mathcal{C}\to\mathcal{C}'$
in $\Cat$.

Indeed, set $(P,\psi)$, $(P',\psi')$, $H$ to be as in (2) above.
For each $Px\in\mathcal{C}/G$ we have $(\id_{(E,\rho)}/G)(Px)=P'((\id_{(E,\rho)})x)=\id_{P'Ex}=\id_{HPx}=(\id_{(E,\rho)/G})(Px)$.

(4) $?/G$ preserves the vertical composition.

Indeed, let $(E,\rho),(E'\rho'),(E'',\rho'')\in(\Cat)(\mathcal{C},\mathcal{C}')$,
and let $\eta\colon(E,\rho)\To(E',\rho')$, $\eta'\colon(E',\rho')\To(E'',\rho'')$
be 2-morphisms in $\Cat$. Set $(P,\psi)$, $(P',\psi')$ to be as in
(2) above. Then for each $Px\in\mathcal{C}/G$ we have\[
((\eta'\eta)/G)(Px)=P'((\eta'\eta)x)=P'(\eta'x)P'(\eta x)=(\eta'/G)(Px)\cdot(\eta/G)(Px).\]
This shows that $(\eta'\eta)/G=(\eta'/G)(\eta/G)$.

(5) $?/G$ preserves the horizontal composition.

Indeed, let $(E,\rho),(E'\rho')\in(\Cat)(\mathcal{C},\mathcal{C}')$,
$(F,\tau),(F'\tau')\in(\Cat)(\mathcal{C}',\mathcal{C}'')$ and $\eta\colon(E,\rho)\To(E',\rho')$,
$\eta'\colon(F,\tau)\To (F',\tau')$ be 2-morphisms in $\Cat$. Then we
have to show the equality\[
(\eta'*\eta)/G=(\eta'/G)*(\eta/G).\]
Set $(P,\psi)$, $(P',\psi')$ and $(P'',\psi'')$ to be as in (2)
above. Then for each $Px\in\mathcal{C}/G$ we have \begin{eqnarray*}
((\eta'*\eta)/G)(Px) & = & P''((\eta'*\eta)x)=P''(((F'\eta)(\eta'E))x)=P''((F'\eta)x\cdot(\eta'E)x)\\
 & = & P''((F'\eta)x)P''((\eta'E)x)=P''(F'(\eta x))\cdot P''(\eta'(Ex)),\end{eqnarray*}
and\begin{eqnarray*}
((\eta'/G)*(\eta/G))(Px) & = & ((F',\tau')/G\cdot\eta/G)(Px)\cdot(\eta'/G\cdot(e,\rho)/G)(Px)\\
 & = & ((F',\tau')/G)(P'(\eta x))\cdot(\eta'/G)(P'Ex)\\
 & = & P''(F'(\eta x))\cdot P''(\eta'(Ex)),\end{eqnarray*}
from which the equality follows, where $((F',\tau')/G)(P'(\eta x))=P''(F'(\eta x))$
follows from the commutative diagram\[
\xymatrix{\calC'(Ex,E'x)\ar@{^{(}->}[r]\ar[rdd]_{P'} & \Ds_{a\in G}\calC'(A_{a}Ex,E'x)\ar[r]^{F'}\ar[dd]_{P'{}_{Ex,E'x}^{(1)}} & \Ds_{a\in G}\calC''(F'A_{a}Ex,F'E'x)\ar[d]^{\Ds_{a\in G}\calC''(\ta_{a}Ex,F'E'x)}\\
 &  & \Ds_{a\in G}\calC''(A_{a}F'Ex,F'E'x)\ar[d]^{P''{}_{F'Ex,F'E'x}^{(1)}}\\
 & (\calC/G)(P'Ex,P'E'x)\ar[r]_{(F'\ta')/G} & (\calC''/G)(P''F'Ex,P''F'E'x).}
\]

As a consequence, $?/G\colon\Cat\to\GrCat$ is a 2-functor. 
\end{proof}

\subsection{Smash 2-functor}

Next we extend the smash product construction to a 2-functor.
\begin{dfn}\label{2-smash}
Let $(H,r)\colon\mathcal{B}\to\mathcal{B}'$ be in $\GrCat$. Then
the functor $(H,r)\#G\colon\mathcal{B}\#G\to\mathcal{B}'\#G$ is defined
as follows.\textbf{}\\
\textbf{On objects}. For each $x^{(a)}\in\mathcal{B}\#G$ we set\[
((H,r)\#G)(x^{(a)}):=(Hx)^{(ar_{x})}.\]
\textbf{On morphisms}. For each $f\in(\mathcal{B}\#G)(x^{(a)},y^{(b)})=\mathcal{B}^{b^{-1}a}(x,y)$
we set\[
((H,r)\#G)(f):=H(f),\]
which is an element of $\mathcal{B}'^{r_{y}^{-1}b^{-1}ar_{x}}(Hx,Hy)=(\mathcal{B}'\#G)((Hx)^{(ar_{x})},(Hy)^{(br_{y})})$.
Then as easily seen, $(H,r)\#G$ is a strictly $G$-equivariant functor,
and hence $(H,r)\#G=((H,r)\#G,1)\colon\mathcal{B}\#G\to\mathcal{B}'\#G$
is in $\Cat$.

Next let $(H',r')\colon\mathcal{B}\to\mathcal{B}'$ be a 1-morphism and
$\theta\colon(H,r)\to(H',r')$ a 2-morphism in $\GrCat$.
We define $\theta\#G\colon(H,r)\#G\To(H',r')\#G$
by\[
(\theta\#G)x^{(a)}:=\theta x\]
for all $x^{(a)}\in\mathcal{B}\#G$. Then it is easy to see that $\theta\#G$
is a 2-morphism in $\Cat$. 
\end{dfn}

\begin{lem}
The definition above extends the smash product construction to a $2$-functor
\[
?\#G\colon G\text{-}\mathbf{GrCat}\to G\text{-}\mathbf{Cat}.\]
\end{lem}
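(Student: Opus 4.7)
The plan is to verify, in parallel with the proof just given for $?/G$ but far more concisely, the five axioms that $?\#G$ must satisfy to be a 2-functor from $\GrCat$ to $\Cat$. The key simplification is that every $(H,r)\#G$ was constructed as a \emph{strictly} $G$-equivariant functor, and every 2-cell $\theta\#G$ has the same components as $\theta$ itself, so the verifications reduce to identities in the ungraded categories plus bookkeeping with the second-coordinate labels $(\cdot)^{(a)}$; there is no invariance adjuster to track.

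First I would check that $\id_{\mathcal{B}}\#G = \id_{\mathcal{B}\#G}$: the identity of $\mathcal{B}$ in $\GrCat$ is $(\id_{\mathcal{B}},1)$, so $(\id_{\mathcal{B}}\#G)(x^{(a)})=x^{(a\cdot 1)}=x^{(a)}$, and the action on morphisms is $\id_{\mathcal{B}}$. For composition $\mathcal{B}\xrightarrow{(H,r)}\mathcal{B}'\xrightarrow{(H',r')}\mathcal{B}''$, the adjuster of the composite is $(r_x r'_{Hx})_x$, so both $(((H',r')(H,r))\#G)(x^{(a)})$ and $((H',r')\#G)\bigl(((H,r)\#G)(x^{(a)})\bigr)$ equal $(H'Hx)^{(a r_x r'_{Hx})}$, while on morphisms both apply $H'H$. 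This settles the two axioms for the underlying 1-functor.

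For 2-cells: $\id_{(H,r)}\#G$ sends $x^{(a)}$ to $\id_{Hx}$, which is the identity of $((H,r)\#G)(x^{(a)})$ in $\mathcal{B}'\#G$. Vertical composition is immediate, since $((\theta'\theta)\#G)(x^{(a)}) = (\theta' x)(\theta x) = ((\theta'\#G)(\theta\#G))(x^{(a)})$. For horizontal composition, given $\theta\colon(H,r)\to(H',r')$ in $\GrCat(\mathcal{B},\mathcal{B}')$ and $\sigma\colon(K,s)\to(K',s')$ in $\GrCat(\mathcal{B}',\mathcal{B}'')$, both $((\sigma*\theta)\#G)(x^{(a)})$ and $((\sigma\#G)*(\theta\#G))(x^{(a)})$ unfold to $K'(\theta x)\cdot\sigma(Hx)$. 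The only point requiring care, and what I expect to be the mildest obstacle, is confirming that each $\theta\#G$ is a genuine 2-cell in $\Cat$, i.e.\ a morphism of $G$-equivariant functors in the earlier sense; but since the equivariance adjusters of $(H,r)\#G$ and $(H',r')\#G$ are identities and $A_c$ acts trivially on morphisms of any smash product, the required compatibility square collapses to the tautology $\theta x = \theta x$ at each $x^{(a)}$, and the proof is complete.
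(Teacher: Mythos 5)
Your proposal is correct and follows essentially the same route as the paper: the paper's own proof writes out only the horizontal-composition check (the one nontrivial verification, which you reproduce with the same computation $K'(\theta x)\cdot\sigma(Hx)$) and declares the remaining axioms immediate from the definition, which are exactly the routine label-bookkeeping identities you spell out. Your only substantive addition is making explicit the (correct) reading of the composite degree adjuster as $r_x r'_{Hx}$ and the collapse of the equivariance square for $\theta\#G$, both of which the paper leaves implicit.
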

\begin{proof}
We only show that $?\#G$ preserves the horizontal composition because
the other properties for $?\#G$ to be a 2-functor are immediate from
the definition. Let $(H,\xi),(H',\xi')\in\GrCat(\mathcal{B},\mathcal{B}')$,
$(F,\ze),(F'\ze')\in\GrCat(\mathcal{B}',\mathcal{B}'')$ and let $\th\colon(H,\xi)\To (H',\xi')$,
$\th'\colon(F,\ze)\To (F'\ze')$ be 2-morphisms in $\GrCat$. For each
$x^{(a)}\in\mathcal{B}\#G$ we have\[
((\theta'*\theta)\#G)(x^{(a)})=(\th'*\th)x=(F'\th)x\cdot(\th'H)x=F'(\th x)\cdot\th'(Hx),\]
and\begin{eqnarray*}
((\th\#G)*(\th\#G))(x^{(a)}) & = & (((F',\ze')\#G)(\th\#G))((\th'\#G)((H,\xi)\#G)))(x^{(a)})\\
 & = & ((F'\ze')\#G)(\th\#G))(x^{(a)})\cdot((\th'\#G)((H,\xi)\#G))(x^{(a)})\\
 & = & ((F',\ze')\#G)(\th x)\cdot(\th'\#G)((Hx)^{(a\xi_{x})})\\
 & = & F'(\th x)\cdot\th'(Hx),\end{eqnarray*}
which shows that $(\theta'*\theta)\#G=(\th\#G)*(\th\#G)$.
\end{proof}

\subsection{Main theorem}

We are now in a position to state our main result, which is a precise form
of Theorem \ref{thm-2-eq}.

\begin{thm}\label{main-thm}
Both $2$-functors $?/G$ and $?\#G$ are $2$-equivalences.
They are mutual $2$-quasi-inverses. Hence the $2$-categories $G$-$\mathbf{Cat}$
and $G$-$\mathbf{GrCat}$ are $2$-equivalent. More precisely, we have
four $2$-natural isomorphisms
\begin{eqnarray*}
\varepsilon\colon\id_{G\text{-}\mathbf{Cat}} & \To & (?\#G)(?/G)\\
\varepsilon'\colon(?\#G)(?/G) & \To & \id_{G\text{-}\mathbf{Cat}}\\
\omega\colon\id_{G\text{-}\mathbf{GrCat}} & \To & (?/G)(?\#G)\\
\omega'\colon(?/G)(?\#G) & \To & \id_{G\text{-}GrCat}
\end{eqnarray*}
with the property that\begin{eqnarray}
\varepsilon'_{\mathcal{C}}\varepsilon_{\mathcal{C}} & = & \id_{\mathcal{C}},\label{eq:ep'ep}\\
\varepsilon{}_{\mathcal{C}}\varepsilon'_{\mathcal{C}} & \cong & \id_{(\mathcal{C}/G)\#G},\label{eq:epep'}\\
\omega'_{\mathcal{B}}\omega_{\mathcal{B}} & = & \id_{\mathcal{B}},\label{eq:om'om}\\
\omega{}_{\mathcal{B}}\omega'_{\mathcal{B}} & \cong & \id_{(\mathcal{B}\#G)/G},\label{eq:omom'}\end{eqnarray}
and that $\varepsilon'_{\mathcal{C}}$ are strictly $G$-equivariant
functors and $\omega_{\mathcal{B}}$ are strictly degree-preserving
functors for all $\mathcal{C}\in G\text{-}\mathbf{Cat}$ and $\mathcal{B}\in G\text{-}\mathbf{GrCat}$.
Furthermore $\varepsilon$ and $\omega'$ are strictly $2$-natural transformations,
and in particular, $?/G$ is strictly left $2$-adjoint to $?\#G$. Namely
the pasting of the diagram\begin{equation}
\vcenter{\xymatrix{G\text{-}\mathbf{GrCat} &  & G\text{-}\mathbf{GrCat}\\
 & G\text{-}\mathbf{Cat} &  & G\text{-}\mathbf{Cat}\ar^{\id}"1,1";"1,3"\ar_{?\#G}"1,1";"2,2"\ar^{?\#G}"1,3";"2,4"\ar_{\id}"2,2";"2,4"\ar_{?/G}"2,2";"1,3"\ar_{\omega'}@{=>}"2,2";"1,2"\ar_{\varepsilon}@{=>}"2,3";"1,3"}
}\label{eq:sharp-G}\end{equation}
is equal to the identity $\id_{?\#G}$, and the pasting of the diagram\begin{equation}
\vcenter{\xymatrix{ & G\text{-}\mathbf{GrCat} &  & G\text{-}\mathbf{GrCat}\\
G\text{-}\mathbf{Cat} &  & G\text{-}\mathbf{Cat}\ar^{?/G}"2,1";"1,2"\ar^{?\#G}"1,2";"2,3"\ar_{?/G}"2,3";"1,4"\ar_{\id}"2,1";"2,3"\ar^{\id}"1,2";"1,4"\ar_{\varepsilon}@{=>}"2,2";"1,2"\ar_{\omega'}@{=>}"2,3";"1,3"}
}\label{eq:slash-G}\end{equation}
is equal to the identity $\id_{?/G}$.
\end{thm}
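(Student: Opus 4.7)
The plan is to construct the four components $\varepsilon,\varepsilon',\omega,\omega'$ explicitly on objects, verify their (strict or weak) 2-naturality, and then check the pointwise identities (\ref{eq:ep'ep})--(\ref{eq:omom'}) and the triangle identities (\ref{eq:sharp-G})--(\ref{eq:slash-G}) in turn.

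For $\mathcal{C}\in\Cat$, I would take $\varepsilon_\mathcal{C}\colon\mathcal{C}\to(\mathcal{C}/G)\#G$ to send $x\mapsto x^{(1)}$ and $f\colon x\to y$ to $Pf\in(\mathcal{C}/G)^{1}(x,y)=((\mathcal{C}/G)\#G)(x^{(1)},y^{(1)})$, with equivariance adjuster whose $c$-component at $x$ is $\psi_c x$ viewed as a morphism $x^{(1)}\to(A_c x)^{(c)}=A_c(x^{(1)})$ in the smash. Dually $\varepsilon'_\mathcal{C}\colon(\mathcal{C}/G)\#G\to\mathcal{C}$ sends $x^{(a)}\mapsto A_a x$, extended on morphisms via the second-form isomorphism $F^{(2)}$ of Proposition \ref{prp:iso-iso}; since $A_c x^{(a)}=x^{(ca)}$, this $\varepsilon'_\mathcal{C}$ is strictly $G$-equivariant, and (\ref{eq:ep'ep}) is immediate, with (\ref{eq:epep'}) witnessed objectwise by $\psi_a x$. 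For $\mathcal{B}\in\GrCat$, define $\omega_\mathcal{B}\colon\mathcal{B}\to(\mathcal{B}\#G)/G$ by $x\mapsto x^{(1)}$, using the identification $(\mathcal{B}\#G)/G(x^{(1)},y^{(1)})\cong\bigoplus_{a\in G}\mathcal{B}^{a}(x,y)=\mathcal{B}(x,y)$ supplied by Proposition \ref{prp:three-orbit-cats}; then $\omega_\mathcal{B}$ is strictly degree-preserving. Take $\omega'_\mathcal{B}$ to be the equivalence $H$ of Proposition \ref{prp:cov-fun-wrt-smash}, equipped with the degree adjuster $r_{x^{(a)}}:=a$; this normalization gives $\omega'_\mathcal{B}\omega_\mathcal{B}=\id_\mathcal{B}$ on the nose.

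With these choices the strict 2-naturality of $\varepsilon$ reduces to $\varepsilon_{\mathcal{C}'}\circ(E,\rho)=(((E,\rho)/G)\#G)\circ\varepsilon_\mathcal{C}$ as morphisms in $\Cat$, which holds because $(E,\rho)/G$ is strictly degree-preserving by Definition \ref{dfn:2-fun-?/G}; the 2-cell part collapses at each $x$ to the identity $P'(\eta x)=P'(\eta x)$. The dual argument gives strict 2-naturality of $\omega'$. For $\varepsilon'$ and $\omega$ the naturality squares commute only up to an isomorphism constructed from $\rho$, resp.\ from $r$, and these isomorphisms are supplied by Lemma \ref{lem:equi-and-inv-is-inv}; this provides exactly the weak 2-naturality the theorem asks for.

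The main obstacle I expect is verifying the triangle identities (\ref{eq:sharp-G}) and (\ref{eq:slash-G}). For (\ref{eq:sharp-G}) one must show $((?\#G)(\omega'_\mathcal{B}))\circ\varepsilon_{\mathcal{B}\#G}=\id_{\mathcal{B}\#G}$; unfolding, $\varepsilon_{\mathcal{B}\#G}$ sends $x^{(a)}$ to $(x^{(a)})^{(1)}\in((\mathcal{B}\#G)/G)\#G$, and then $(?\#G)(\omega'_\mathcal{B})$ sends this to $(\omega'_\mathcal{B}(x^{(a)}))^{(1\cdot r_{x^{(a)}})}=x^{(a)}$ precisely because of the normalization $r_{x^{(a)}}=a$. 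The dual pasting (\ref{eq:slash-G}) is handled similarly, with the degree layer now being absorbed by the $\psi$-twist inside $\varepsilon_\mathcal{C}$. Once these are checked, the strict-left-2-adjoint statement is immediate from the construction, and the full 2-equivalence follows because the components $\varepsilon_\mathcal{C}$, $\varepsilon'_\mathcal{C}$, $\omega_\mathcal{B}$, $\omega'_\mathcal{B}$ are already equivalences of categories by the corresponding results of \cite{Asa-cover}.
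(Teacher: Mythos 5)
Your construction coincides with the paper's own proof: the same four explicit components ($\varepsilon_{\mathcal{C}}\colon x\mapsto (Px)^{(1)}$ with adjuster built from $\psi$, $\varepsilon'_{\mathcal{C}}\colon (Px)^{(a)}\mapsto A_{a}x$, $\omega_{\mathcal{B}}\colon x\mapsto P(x^{(1)})$, and $\omega'_{\mathcal{B}}$ the factorization of $Q$ through $(P,\psi)$ with degree adjuster $r(P(x^{(a)}))=a$), the same strict/weak split among the four transformations, and the same pointwise witnesses $\psi_{a}x$ and $\psi_{a}(x^{(1)})$ for (\ref{eq:epep'}) and (\ref{eq:omom'}). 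The only slip is notational, in the equivariance adjuster of $\varepsilon_{\mathcal{C}}$: since $A_{c}(x^{(1)})=x^{(c)}$ (not $(A_{c}x)^{(c)}$), the component $\psi_{c}x$ must be read as a degree-$c$ morphism $(Px)^{(c)}\to(PA_{c}x)^{(1)}$, i.e.\ as $A_{c}\varepsilon_{\mathcal{C}}x\to\varepsilon_{\mathcal{C}}A_{c}x$, which is exactly the orientation the paper uses.
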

The proof is given in the next section.

\subsection{Proof of Theorem \ref{thm-2-mor}}
(1) and (2) These are direct consequences of \eqref{eq:ep'ep}--\eqref{eq:omom'}.

(3) This follows from \eqref{eq:sharp-G} and \eqref{eq:slash-G}
by a general theory of 2-categories (see e.g.\ \cite{Ke-St}, \cite{Bor};
the proof proceeds just the same way as in the usual category case).

(4) $\Cat(\calC, \calC') \simeq \Cat(\calC, (\calC'/G)\# G)
\iso \Cat(\calC/G, \calC'/G)$.

(5) A similar proof as above works.
\qed

\medskip
Theorem \ref{thm-2-mor} gives the following.
\begin{cor}
Let $\calC, \calC' \in \Cat$.
Then we have a faithful embedding
$$
\Cat(\calC, \calC') \to \Inv(\calC, \calC'/G)
$$
of $\k$-categories.
\end{cor}

\begin{proof}
$\Cat(\calC, \calC') \simeq \GrCat(\calC/G, \calC'/G)
\subseteq \Fun(\calC/G, \calC'/G) \iso \Inv(\calC, \calC'/G)$,
where the first equivalence is an injection on objects
by \eqref{eq:ep'ep}.
Indeed, if $(F, \ph), (F', \ph') \in \Cat(\calC, \calC')$ and
$(F, \ph)/G = (F', \ph')/G$, then the naturality of $\ep$ shows that
$$
\ep_{\calC'}(F,\ph) = (((F, \ph)/G)\# G) \ep_\calC =
(((F', \ph')/G)\# G) \ep_\calC = \ep_{\calC'}(F',\ph').
$$
Hence by \eqref{eq:ep'ep} we have $(F, \ph) = (F', \ph')$.
\end{proof}

\subsection{Weak universality of the canonical functor of a smash product}

As an application of Theorem \ref{main-thm} we obtain the proposition below,
which states that the canonical functor
$(Q,\id)\colon\mathcal{B}\#G\to\mathcal{B}$ to a $G$-graded category $\calB$ has the weak universality
among $G$-invariant functors from $G$-categories to $\calB$ that
{\em induce degree-preserving functors}
(see Definition \ref{dfn:induce-degree-preserving} below).
(It often does not have the universality as Remark \ref{not-universal} shows.)

\begin{dfn}
\label{dfn:induce-degree-preserving}
Let $\calC$ be a $G$-category with the canonical functor
$(P,\psi)\colon\mathcal{C}\to\mathcal{C}/G$,
$\calB$ a $G$-graded category,
and $r \colon \calC_0 \to G$ a map.
Then a $G$-invariant functor $(F,\phi)\colon\mathcal{C}\to\mathcal{B}$
is said to {\em induce a degree-preserving functor with} $r$ if
the unique functor $H\colon\mathcal{C}/G\to\mathcal{B}$
such that $(F,\phi)=H(P,\psi)$ (the existence of which is guaranteed 
by Proposition $\ref{prp:univ-orbit}$) has the property that
$(H, r)$ is a degree-preserving functor.
\end{dfn}

\begin{lem}
\label{lem:compatible-degree-preserving}
Let $\calC$ be a $G$-category and $\calB$ a $G$-graded category.
Then a $G$-invariant functor $(F,\phi)\colon\mathcal{C}\to\mathcal{B}$
induces a degree-preserving functor with a map $r\colon \calC_0 \to G$
if and only if for each $x, y \in \calC$ and $a \in G$
the restriction of
$$
F_{x,y}^{(1)}\colon\Ds_{b\in G}\calC(A_{b}x,y) \to \calB(Fx, Fy)
$$
to $\calC(A_{r_ya}x,y)$ induces a homomorphism
$\calC(A_{r_ya}x,y) \to \calB^{ar_x}(Fx, Fy)$,
or equivalently,  for each $f \in \calC(A_{r_ya}x,y)$ we have
$F(f)\cdot \ph_{r_ya}x \in \calB^{ar_x}(Fx, Fy)$.
\end{lem}

\begin{proof}
This follows from the definition \eqref{eq:grading-C/G}
of the $G$-grading of $\calC/G$ and the commutativity of the diagram
\begin{equation}
\label{eq:induced-funct}
\xymatrix{
\Ds_{a\in G}\calC(A_a x,y) \ar[rr]^{F^{(1)}_{x,y}}
\ar[rd]_{P^{(1)}_{x,y}}&&\calB(Fx, Fy)\\
&(\calC/G)(x,y).\ar[ru]_H
}
\end{equation}
(see Proof of \cite[Proposition 2.6 (3)]{Asa-cover}).
\end{proof}

\begin{prp}
\label{prp:smash-prod-fun-weak-univ}
Let $\calC$ be a $G$-category, $\mathcal{B}$ a $G$-graded
category, and $(Q,\id)\colon\mathcal{B}\#G\to\mathcal{B}$ the canonical
functor. If $(F,\phi)\colon\mathcal{C}\to\mathcal{B}$
is a $G$-invariant functor inducing a degree-preserving functor, then
there exists a $G$-equivariant functor $(K,\rho)\colon\mathcal{C}\to\mathcal{B}\#G$
such that $(F, \phi)=(Q,\id)(K,\rho)$.
\end{prp}

\begin{proof}
Let $(P,\psi)\colon\mathcal{C}\to\mathcal{C}/G$ be the canonical
functor, and assume that a $G$-invariant functor
$(F,\phi)\colon\mathcal{C}\to\mathcal{B}$
induces a degree-preserving functor with a map $r\colon \calC_0 \to G$.
Then there exists a unique equivalence $H\colon\mathcal{C}/G\to\mathcal{B}$
such that $(F,\phi)=H(P,\psi)$ and $(H, r)$ is a degree-preserving functor.
It is easy to verify the commutativity of the diagram

\[
\xymatrix@C=5em{ & \calC\ar[r]^{(F,\ph)}\ar[d]_{(P,\ps)}\ar[ld]_{(\ep_{\calC},\phi_{\mathcal{C}})} & \calB\ar@{=}[d]\\
(\calC/G)\#G\ar[r]_{(Q_{\calC/G},\id)}\ar[rd]_{(H, r)\#G} & \calC/G\ar[r]_{(H, r)} & \calB\\
 & \calB\#G\ar[ur]_{(Q,\id)}}
\]
using the explicit forms of the functors (see Definition \ref{dfn:ep}
for $\ep_{\mathcal{C}}=(\ep_{\mathcal{C}},\ph_{\mathcal{C}})$). Thus
we can take $(K,\rho):=((H, r)\#G)(\ep_{\mathcal{C}},\ph_{\mathcal{C}})$,
which is $G$-equivariant by Lemma \ref{lem:eqv-eqv-is-eqv}.\end{proof}
\begin{rmk}\label{not-universal}
In the above proposition $(K,\rho)$ is not uniquely determined in
general. For instance, consider the case that the center $Z(G$) of
$G$ is not trivial, and take $\mathcal{C}:=\mathcal{B}\#G$ and $(F,\phi):=(Q,\id)$.
Then $(K,\rho):=(A_{a},\id)$ satisfies the required property for
all $a\in Z(G)$.
\end{rmk}

Also the weak universality of $(Q,\id)\colon\mathcal{B}\#G\to\mathcal{B}$
gives us a characterization of a $G$-covering functor to $\calB$
inducing a degree-preserving functor.

\begin{prp}
\label{prp:smash-prod-fun}
Let $\calC$ be a $G$-category, $\mathcal{B}$ a $G$-graded category with
the canonical functor $(Q,\id)\colon\mathcal{B}\#G\to\mathcal{B}$,
and $(F,\phi)\colon\mathcal{C}\to\mathcal{B}$ a $G$-invariant functor
inducing a degree-preserving functor.
Then $(F,\phi)$ is a $G$-covering functor if and only if there exists
a $G$-equivariant equivalence $(K,\rho)\colon\mathcal{C}\to\mathcal{B}\#G$
such that $(F,\phi)=(Q,\id)(K,\rho)$.
\end{prp}

\begin{proof}
($\implies$). We keep the notation and the argument used in the proof
of the proposition above, which constructed a $G$-equivariant functor
$(K,\rho)\colon\mathcal{C}\to\mathcal{B}\#G$ such that $(F, \phi)=(Q,\id)(K,\rho)$.
Since $?\#G$ is a 2-functor, $(H, r)\#G$ is an equivalence. In
addition $(\ep_{\mathcal{C}},\ph_{\mathcal{C}})$ is also a $G$-equivariant
equivalence by Theorem \ref{main-thm}. Hence as the composite of
these $(K,\rho)$ is an equivalence.

($\impliedby$). This follows by Lemma \ref{lem:equi-and-inv-is-inv}(2).
\end{proof}

\section{Proof of Theorem \ref{main-thm}.}

\subsection{$\varepsilon\colon\id_{G\text{-}\mathbf{Cat}}\To (?\#G)(?/G)$}

\begin{dfn}[see {\cite[Theorem 5.10]{Asa-cover}}]
\label{dfn:ep}Let $\mathcal{C}$ be an object of $\Cat$ and $(P,\psi)\colon\mathcal{C}\to\mathcal{C}/G$
the canonical functor. We define a $G$-equivariant functor $\varepsilon_{\mathcal{C}}\colon\mathcal{C}\to(\mathcal{C}/G)\#G$
as follows.\textbf{}\\
\textbf{On objects}. For each $x\in\mathcal{C}$ we set\[
\varepsilon_{\mathcal{C}}(x):=(Px)^{(1)}.\]
\textbf{On morphisms}. For each $f\colon x\to y$ in $\mathcal{C}$,
we set\[
\varepsilon_{C}(f):=P_{x,y}^{(1)}(f)\,(=P(f)).\]
\textbf{Natural isomorphisms}. For each $a\in G$ we define a natural
transformation $\phi_{a}\colon A_{a}\varepsilon_{\mathcal{C}}\to\varepsilon_{\mathcal{C}}A_{a}$
by $\phi_{a}x:=\psi_{a}x$ for all $x\in\mathcal{C}$, i.e., by the
commutative diagram\[
\begin{CD}A_{a}\ep_{\calC}x@>{\ph_{a}x}>>\ep_{\mathcal{C}}A_{a}x\\
@\vert@\vert\\
(Px)^{(a)}@>>\psi_{a}x>(PA_{a}x)^{(1)}.\end{CD}\]
Here note that $((\mathcal{C}/G)\#G)((Px)^{(a)},(PA_{a}x)^{(1)})=(\mathcal{C}/G)^{a}(Px,PA_{a}x)\ni\psi_{a}x$.
Set $\phi_{\mathcal{C}}:=(\phi_{a})_{a\in G}$. Then we have already
shown that $\varepsilon_{\mathcal{C}}=(\varepsilon_{\mathcal{C}},\phi_{\mathcal{C}})$
is a $G$-equivariant equivalence in \cite[Theorem 5.10]{Asa-cover}.\end{dfn}
\begin{lem}
$\varepsilon$ is a strictly $2$-natural transformation.\end{lem}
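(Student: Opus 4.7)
The plan is to check directly the two conditions defining a strictly 2-natural transformation: strict commutativity of the naturality squares on 1-cells, and equality of whiskered 2-cells on 2-cells. Throughout, for $\calC\in\Cat$ write $(P,\psi)\colon\calC\to\calC/G$ for the canonical functor, and recall that $\varepsilon_{\calC}$ has equivariance adjuster $\phi$ with $\phi_{a}x=\psi_{a}x$.

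For 1-cell naturality, fix a morphism $(E,\rho)\colon\calC\to\calC'$ in $\Cat$ and write $(E,\rho)/G=(H,1)$. By Definition \ref{dfn:2-fun-?/G} we have $H(P,\psi)=(P',\psi')(E,\rho)$ as $G$-invariant functors. Unwinding invariance adjusters via Lemmas \ref{lem:compn-of-inv-and-fun} and \ref{lem:equi-and-inv-is-inv}(1), this single equation is equivalent to the two identities $HP=P'E$ and
\[
H\psi_{a}=(P'\rho_{a})(\psi'_{a}E)\quad(a\in G).
\]
I will verify that
\[
((E,\rho)/G\,\#G)\circ\varepsilon_{\calC}=\varepsilon_{\calC'}\circ(E,\rho)
\]
strictly in $\Cat$. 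Agreement on objects, $x\mapsto(HPx)^{(1)}=(P'Ex)^{(1)}$, and on morphisms, $f\mapsto H(Pf)=P'(Ef)$, is immediate from $HP=P'E$ together with the definitions of $\varepsilon$ and $?\#G$. For equivariance adjusters, Lemma \ref{lem:eqv-eqv-is-eqv}(1) shows that the right-hand composite has adjuster $P'(\rho_{a}x)\cdot\psi'_{a}(Ex)$ at $x$, while the left-hand composite has adjuster $(H\#G)(\psi_{a}x)=H(\psi_{a}x)$ at $x$, since $(H,1)\#G$ is strictly $G$-equivariant. The two coincide by the displayed identity $H\psi_{a}=(P'\rho_{a})(\psi'_{a}E)$.

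For 2-cell naturality, let $\eta\colon(E,\rho)\to(E',\rho')$ be a 2-cell in $\Cat$. I must show $\varepsilon_{\calC'}*\eta=((\eta/G)\#G)*\varepsilon_{\calC}$. At each $x\in\calC$ the whiskering $\varepsilon_{\calC'}*\eta$ has component $\varepsilon_{\calC'}(\eta x)=P'(\eta x)$, whereas $((\eta/G)\#G)*\varepsilon_{\calC}$ has component $((\eta/G)\#G)_{(Px)^{(1)}}=(\eta/G)(Px)$, which equals $P'(\eta x)$ by the explicit formula for $\eta/G$ in Definition \ref{dfn:2-fun-?/G}. Hence the two 2-cells agree.

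The only step with any real content is the match of equivariance adjusters in the 1-cell part; once recognised as the invariance-adjuster component of the defining equality $H(P,\psi)=(P',\psi')(E,\rho)$, it is automatic. All remaining items are mechanical unwindings of the definitions of $?/G$, $?\#G$, and the $G$-action on the smash product.
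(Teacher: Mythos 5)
Your proof is correct and follows essentially the same route as the paper: the paper also reduces the 1-cell naturality to the strict commutativity of the square $H(P,\psi)=(P',\psi')(E,\rho)$ coming from the definition of $(E,\rho)/G$ (its displayed hom-set diagram is just the unwinding of your identity $H\psi_{a}=(P'\rho_{a})(\psi'_{a}E)$ at the level of $P^{(1)}$-maps), and disposes of the 2-cell condition componentwise via the explicit formula $(\eta/G)Px=P'(\eta x)$ exactly as you do.
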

\begin{proof}
Let $\mathcal{C},\mathcal{C}'\in\Cat$.

(1) Let $(E,\rho)\in(\Cat)(\mathcal{C},\mathcal{C}')$. Set $(H,1):=(E,\rho)/G$.
Then we have a strictly commutative diagram\[
\xymatrix{\mathcal{C}\ar[r]^{(E,\rho)}\ar[d]_{(P,\psi)} & \mathcal{C}'\ar[d]^{(P',\psi')}\\
\mathcal{C}/G\ar[r]_{(H,1)} & \mathcal{C}'/G,}
\]
where the vertical arrows are the canonical functors. For each $x,y\in\mathcal{C}$
we have a commutative diagram\[
\xymatrix{\calC(x,y)\ar@{^{(}->}[r] & \Ds_{a\in G}\calC(A_{a}x,y)\ar[r]^{E}\ar[dd]_{P_{x,y}^{(1)}} & \Ds_{a\in G}\calC'(EA_{a}x,Ey)\ar[d]^{\Ds_{a\in G}\calC'(\ro_{a}x,Ey)}\\
 &  & \Ds_{a\in G}\calC'(A_{a}Ex,Ey)\ar[d]^{P'{}_{Ex,Ey}^{(1)}}\\
 & (\calC/G)(Px,Py)\ar[r]_{H_{Px,Py}} & (\calC'/G)(P'Ex,P'Ey)}
\]
by which it is easy to see that the following diagram is strictly
commutative:\[
\xymatrix{\mathcal{C}\ar[r]^{(E,\rho)}\ar[d]_{\ep_{\mathcal{C}}} & \mathcal{C}'\ar[d]^{\ep_{\mathcal{C}'}}\\
(\mathcal{C}/G)\#G\ar[r]_{(H,1)\#G} & (\mathcal{C}'/G)\#G.}
\]

(2) Let $\et\colon(E,\rho)\to(E',\rho')$ be in $(\Cat)(\mathcal{C},\mathcal{C}')$.
Set $(H,1):=(E,\rho)/G$, $(H',1):=(E',\rho')/G$ and $\th\colon=\et/G$.
Then it immediately follows from definition that $\ep_{\mathcal{C}'}\et=(\et/G)\#G\cdot\ep_{\mathcal{C}}$.

By (1) and (2) above $\ep$ is a strictly 2-natural transformation.
\end{proof}

\subsection{\textmd{$\varepsilon'\colon(?\#G)(?/G)\To\id_{G\text{-}\mathbf{Cat}}$}}
\begin{dfn}
Let $\mathcal{C}$ be an object of $\Cat$ and $(P,\psi)\colon\mathcal{C}\to\mathcal{C}/G$
the canonical functor. We define a $G$-equivariant functor $\varepsilon'_{\mathcal{C}}\colon(\mathcal{C}/G)\#G\to\mathcal{C}$
as follows.\textbf{}\\
\textbf{On objects}. For each $x\in\mathcal{C}$ and $a\in G$
we set
\[
\varepsilon_{\mathcal{C}}'((Px)^{(a)}):=A_{a}x.
\]
\textbf{On morphisms}. Let $f\colon(Px)^{(a)}\to(Py)^{(b)}$ be in
$(\mathcal{C}/G)\#G$. Then we have the diagram
\[
\xymatrix{((\mathcal{C}/G)\#G)((Px)^{(a)},(Py)^{(b)}) & \calC(A_{a}x,A_{b}x)\\
(\mathcal{C}/G)^{b^{-1}a}(Px,Py) & \calC(A_{b^{-1}a}x,y).\ar@{-->}"1,1";"1,2"\ar_{\iso}^{P_{x,y}^{(1)}}"2,2";"2,1"\ar_{A_{b}}^{\wr}"2,2";"1,2"\ar@{=}"1,1";"2,1"}
\]
Using this we set\[
\varepsilon'_{\mathcal{C}}(f):=A_{b}{P_{x,y}^{(1)}}^{-1}(f).\]
\textbf{Natural isomorphisms}. For each $a\in G$ we easily see that
$A_{a}\varepsilon'_{\mathcal{C}}=\varepsilon'_{\mathcal{C}}A_{a}$.
Thus $\varepsilon'_{\mathcal{C}}$ is a strictly $G$-equivariant
functor.\end{dfn}
\begin{lem}
$\varepsilon'$ is a $2$-natural transformation.
\end{lem}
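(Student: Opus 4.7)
To show that $\varepsilon'$ is a 2-natural transformation, I must exhibit, for every morphism $(E,\rho)\colon\mathcal{C}\to\mathcal{C}'$ of $\Cat$, an invertible 2-cell
\[
\alpha_{(E,\rho)}\colon (E,\rho)\circ\varepsilon'_{\mathcal{C}}\;\Longrightarrow\;\varepsilon'_{\mathcal{C}'}\circ\bigl(((E,\rho)/G)\#G\bigr)
\]
in $\Cat$, and verify the usual compatibilities with identities, compositions, and 2-cells. Evaluating at an object $(Px)^{(a)}\in(\mathcal{C}/G)\#G$, the source sends it to $EA_{a}x$ and the target sends it to $A_{a}Ex$, so the obvious choice is
\[
\alpha_{(E,\rho)}(Px)^{(a)}:=(\rho_{a}x)^{-1}\colon EA_{a}x\longrightarrow A_{a}Ex.
\]

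The first step is to check that $\alpha_{(E,\rho)}$ is a natural transformation. Any morphism $f\colon(Px)^{(a)}\to(Py)^{(b)}$ in $(\mathcal{C}/G)\#G$ corresponds, via $P_{x,y}^{(1)}$, to a unique element $g=(g_{c})_{c\in G}\in\bigoplus_{c\in G}\mathcal{C}(A_{c}x,y)$. Both sides of the naturality square can then be written out explicitly using the formula defining $\varepsilon'_{\mathcal{C}}$ on morphisms and the defining equation $(P',\psi')(E,\rho)=((E,\rho)/G)(P,\psi)$ of $(E,\rho)/G$; the resulting identity becomes a straightforward consequence of the hexagon axiom for $\rho$ and of naturality of each $\rho_{a}$.

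Next I verify that $\alpha_{(E,\rho)}$ is a morphism of $G$-equivariant functors, i.e.\ a 2-cell in $\Cat$. The target $\varepsilon'_{\mathcal{C}'}\circ(((E,\rho)/G)\#G)$ is strictly $G$-equivariant (both factors are), while the equivariance adjuster of the source $(E,\rho)\circ\varepsilon'_{\mathcal{C}}$ is given by Lemma~\ref{lem:eqv-eqv-is-eqv}(1) in terms of $\rho$. Substituting $\alpha_{(E,\rho)}(Px)^{(a)}=(\rho_{a}x)^{-1}$ into the defining square from Definition~\ref{dfn:mor-of-inv} reduces the condition to the hexagon axiom for $\rho$ written in inverted form.

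For the behaviour of $\varepsilon'$ on 2-cells, given $\eta\colon(E,\rho)\to(E',\rho')$ in $\Cat$, the required equality
\[
\alpha_{(E',\rho')}\cdot(\eta*\id_{\varepsilon'_{\mathcal{C}}})=(\id_{\varepsilon'_{\mathcal{C}'}}*(\eta/G)\#G)\cdot\alpha_{(E,\rho)}
\]
follows on each object $(Px)^{(a)}$ by substituting the explicit form $(\eta/G)Px=P'(\eta x)$ from Definition~\ref{dfn:2-fun-?/G} and using that $\eta$ is a morphism of $G$-equivariant functors, which links $\eta$ with $\rho$ and $\rho'$. Compatibility of $\alpha$ with composition and identities of morphisms in $\Cat$ is a similar but even simpler diagram chase. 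The main obstacle throughout is careful bookkeeping of the various adjusters ($\rho$, $\psi$, $\psi'$, and the trivial degree adjuster of $(E,\rho)/G$); conceptually, one could also derive everything formally from the already established strict 2-naturality of $\varepsilon$ together with the identities $\varepsilon'_{\mathcal{C}}\varepsilon_{\mathcal{C}}=\id$ and $\varepsilon_{\mathcal{C}}\varepsilon'_{\mathcal{C}}\cong\id$ from \eqref{eq:ep'ep}--\eqref{eq:epep'}, which force a quasi-inverse of a strictly 2-natural equivalence to be 2-natural in the weak sense of this paper.
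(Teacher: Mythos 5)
Your proposal is correct and follows essentially the same route as the paper: the paper defines the comparison 2-cell $\Psi_{(E,\rho)}$ componentwise by $(\Psi_{(E,\rho)})(Px)^{(a)}:=\rho_{a}x$ (your $\alpha_{(E,\rho)}$ is just its inverse, pointing the other way, which is immaterial for the weak notion of 2-naturality used here) and then verifies naturality and compatibility with 2-cells exactly as you sketch. Your additional checks (the cocycle condition for $\rho$ giving the morphism-of-$G$-equivariant-functors property) correctly fill in what the paper dismisses as ``not hard to verify.''
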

\begin{proof}
Let $(E,\rho)\colon\mathcal{C}\to\mathcal{C}'$ be a 1-morphism in $\Cat$.
We define a natural transformation $\Psi_{(E,\rho)}$ in the diagram
\[
\xymatrix@C=7em{(\calC/G)\#G\ar[r]^{((E,\rho)/G)\#G} & (\calC'/G)\#G\\
\calC\ar[r]_{(E,\rho)} & \calC'\ar_{\ep'_{\calC}}"1,1";"2,1"\ar^{\ep'_{\calC'}}"1,2";"2,2"\ar@{=>}_{\Psi_{(E,\rho)}}^{\iso}"1,2";"2,1"}
\]
by\[
(\Psi_{(E,\rho)})(Px)^{(a)}:=\rho_{a}x\]
for all $(Px)^{(a)}\in(\mathcal{C}/G)\#G$. Then it is not hard to
verify that $\Psi_{(E,\rho)}$ is a natural isomorphism. This shows
the 1-naturality of $\ep'$. Now let $(E',\rho)'\colon\mathcal{C}\to\mathcal{C}'$
be another 1-morphism and $\eta\colon(E,\rho)\To (E'\rho')$ a 2-morphism
in $\Cat$. Then it is easy to check the commutativity of the diagram
\[
\xymatrix@C=5em{
\ep'_{\calC'}\cdot((E,\rho)/G)\#G & (E,\rho)\cdot \ep'_{\calC}\\
\ep'_{\calC'}\cdot((E',\rho')/G)\#G & (E',\rho')\cdot \ep'_{\calC}
\ar@{=>}^(0.6){\Psi_{(E,\rho)}}_(0.6){\iso}"1,1";"1,2"
\ar@{=>}_(0.6){\Psi_{(E',\rho')}}^(0.6){\iso}"2,1";"2,2"
\ar@{=>}^{\ep'_{\calC'}\cdot((\et/G)\#G)}"1,1";"2,1"
\ar@{=>}^{\et\cdot\ep'_{\calC}}"1,2";"2,2"
}
\]
of natural transformations, which shows the 2-naturality of $\ep'$.
\end{proof}

\subsection{\textmd{$\omega\colon\id_{G\text{-}\mathbf{GrCat}}\To (?/G)(?\#G)$}}

\begin{dfn}[see {\cite[Proposition 5.6]{Asa-cover}}]
\label{dfn:omega}
Let $\mathcal{B}\in\GrCat$ and let $(P,\psi)\colon\mathcal{B}\#G\to(\mathcal{B}\#G)/G$
be the canonical functor. We define a 1-morphism $\om_{\mathcal{B}}\colon\mathcal{B}\to(\mathcal{B}\#G)/G$
in $\GrCat$ as follows.\textbf{}\\
\textbf{On objects.} For each $x\in\mathcal{B}$ we set\[
\om_{\mathcal{B}}(x):=P(x^{(1)}).\]
\textbf{On morphisms.} For each $f\colon x\to y$ in $\mathcal{B}$,
we set\[
\om_{\mathcal{B}}(f):=P_{x^{(1)},y^{(1)}}^{(1)}(f).\]
Then we have already shown that $\om_{\mathcal{B}}$ is a strictly
degree-preserving equivalence of $G$-graded categories in \cite[Proposition 5.6]{Asa-cover}. \end{dfn}
\begin{lem}
$\om$ is a $2$-natural transformation.\end{lem}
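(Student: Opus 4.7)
The plan is to exhibit, for each morphism $(H,r)\colon\calB\to\calB'$ in $\GrCat$, an explicit natural isomorphism realizing the 1-naturality of $\om$, and then to verify the 2-naturality condition on 2-cells. Write $(P,\ps)\colon\calB\#G\to(\calB\#G)/G$ and $(P',\ps')\colon\calB'\#G\to(\calB'\#G)/G$ for the canonical $G$-invariant functors, and set $K:=((H,r)\#G)/G$, which is strictly degree-preserving by the construction of the orbit 2-functor.

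For the 1-cell filler $\Om_{(H,r)}\colon \om_{\calB'}\cdot(H,r)\Rightarrow K\cdot\om_{\calB}$ I would set
\[
(\Om_{(H,r)})_x := \ps'_{r_x}((Hx)^{(1)}).
\]
This is well-typed because $(K\cdot\om_{\calB})(x)=K(P(x^{(1)}))=P'(((H,r)\#G)(x^{(1)}))=P'((Hx)^{(r_x)})$, while $(\om_{\calB'}\cdot(H,r))(x)=P'((Hx)^{(1)})$, and $\ps'_{r_x}\colon P'\to P'A_{r_x}$ satisfies $A_{r_x}(Hx)^{(1)}=(Hx)^{(r_x)}$. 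Using the isomorphism $\calC/G\iso\calC\fstorbit G$ of Proposition~\ref{prp:three-orbit-cats}, the component $\ps'_{r_x}(Hx)^{(1)}$ sits in the $r_x$-graded piece of $(\calB'\#G)/G$, which matches the degree $1^{-1}r_x=r_x$ required of a 2-cell between a morphism with degree adjuster $r$ and one with degree adjuster $1$.

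The main computational step is to verify that $\Om_{(H,r)}$ is a natural transformation of the underlying functors. I would check this on a homogeneous morphism $f\in\calB^d(x,y)$, viewed as a morphism $x^{(d)}\to y^{(1)}$ in $\calB\#G$; then $H(f)$ is a morphism $(Hx)^{(dr_x)}\to (Hy)^{(r_y)}$ in $\calB'\#G$. Unpacking both paths of the naturality square reduces them to compositions in $(\calB'\#G)/G$ between the appropriate components of $\ps'$ and the images of $f$ under $P$ and $H$. The key input is the defining equality $K\cdot(P,\ps)=(P',\ps')\cdot((H,r)\#G)$ of $K$ as a $G$-invariant functor (together with the cocycle property of $\ps'$), which forces both compositions to coincide. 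Tracking the index shifts induced by $r$ and the smash product is the delicate part and the main obstacle; everything else is formal.

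Finally, for the 2-naturality of $\om$, let $\th\colon(H,r)\to(H',r')$ be a 2-cell in $\GrCat$. I would show
\[
\bigl(((\th\#G)/G)\cdot\om_{\calB}\bigr)\circ\Om_{(H,r)} \;=\; \Om_{(H',r')}\circ\bigl(\om_{\calB'}\cdot\th\bigr),
\]
which at each $x\in\calB$ becomes an equality in $(\calB'\#G)/G$ between two compositions involving a component of $\ps'$ and the morphism $\th x\in\calB'^{{r'_x}^{-1}r_x}(Hx,H'x)$, regarded as a morphism $(Hx)^{(r_x)}\to(H'x)^{(r'_x)}$ in $\calB'\#G$. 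The naturality of $\ps'$ applied to $\th x$ yields the required equality, completing the proof that $\om$ is a (weak) 2-natural transformation.
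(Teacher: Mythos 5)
Your proposal is correct and follows essentially the same route as the paper: your filler $\Om_{(H,r)}$ with component $\ps'_{r_x}((Hx)^{(1)})$ is exactly the paper's $\Phi_{(H,r)}$, and your 2-naturality square is precisely the one the paper checks. You supply more detail (the degree bookkeeping and the reduction of naturality to the defining equality of $((H,r)\#G)/G$ as a $G$-invariant functor) where the paper simply asserts the verifications are routine, but the argument is the same.
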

\begin{proof}
Let $(H,r)\colon\mathcal{B}\to\mathcal{B}'$ be a 1-morphism in $\GrCat$
and $(P',\psi')\colon\mathcal{B}'\#G\to(\mathcal{B}'\#G)/G$ the canonical
functor. We define a natural transformation $\Phi_{(H,r)}$ in the
diagram
\[
\xymatrix@C=7em{
\mathcal{B}\ar[r]^{(H,r)} & \mathcal{B}'\\
(\calB\#G)/G\ar[r]_{((H,r)\#G)/G} & (\calB'\#G)/G\ar_{\om_{\calB}}"1,1";"2,1"\ar^{\om_{\calB'}}"1,2";"2,2"\ar@{=>}_{\Phi_{(H,r)}}^{\iso}"1,2";"2,1"}
\]
by
\[
\Phi_{(H,r)}x:=\phi'_{r_{x}}(Hx)^{(1)}
\]
for all $x\in\mathcal{B}$. Then it is not hard to verify that $\Phi_{(H,r)}$
is a natural isomorphism. This shows the 1-naturality of $\om$. Now
let $(H',r')\colon\mathcal{B}\to\mathcal{B}'$ be another 1-morphism
and $\th\colon(H,r)\To (H'r')$ a 2-morphism in $\GrCat$. Then it is easy
to check the commutativity of the diagram
\[
\xymatrix@C=5em{
\om_{\calB'}\cdot (H,r) & ((H,r)\#G)/G\cdot \om_{\calB}\\
\om_{\calB'}\cdot (H',r') & ((H',r')\#G)/G\cdot \om_{\calB}
\ar@{=>}^(0.4){\Phi_{(H,r)}}_(0.4){\iso}"1,1";"1,2"
\ar@{=>}_(0.4){\Phi_{(H',r')}}^(0.4){\iso}"2,1";"2,2"
\ar@{=>}^{\om_{\calB'}\cdot \th}"1,1";"2,1"
\ar@{=>}^{(\th\#G)/G\cdot\om_{\calB}}"1,2";"2,2"
}
\]
of natural transformations, which shows the 2-naturality of $\om$.
\end{proof}

\subsection{$\omega'\colon(?/G)(?\#G)\To\id_{G\text{-}GrCat}$}

\begin{dfn}[see Proposition \ref{prp:cov-fun-wrt-smash}]
\label{dfn:om'}
Let $\mathcal{B}\in\GrCat$ and let $(P,\psi)\colon\mathcal{B}\#G\to(\mathcal{B}\#G)/G$
be the canonical functor. We define a functor $\om'_{\mathcal{B}}\colon(\mathcal{B}\#G)/G\to\mathcal{B}$
as the unique functor that makes the diagram\[
\xymatrix{\calB\#G\ar[rr]^{Q}\ar[rd]_{(P,\psi)} &  & \calB\\
 & (\calB\#G)/G\ar@{-->}[ru]_{\om_{\mathcal{B}}'}}
\]
strictly commutative, where $Q$ is the canonical $G$-covering functor
associated to the smash product. Namely, $\om'_{\mathcal{B}}$ is
defined as follows.\textbf{}\\
\textbf{On objects}. For each $P(x^{(a)})\in(\mathcal{B}\#G)/G$
we set\[
\om'_{\mathcal{B}}(P(x^{(a)})):=x.\]
\textbf{On morphisms}. For each $P(x^{(a)}),P(y^{(b)})\in(\mathcal{B}\#G)/G$,
we have the following diagram:\[
\xymatrix{((\calB\#G)/G)(P(x^{(a)}),P(y^{(b)})) & \calB(x,y)\\
\Ds_{c\in G}(\calB\#G)(x^{(ca)},y^{(b)}) & \Ds_{c\in G}\calB^{b^{-1}ca}(x,y).\ar@{-->}"1,1";"1,2"\ar_{\wr}^{P_{x^{(a)},y^{(b)}}^{(1)}}"2,1";"1,1"\ar@{=}"2,1";"2,2"\ar@{=}"2,2";"1,2"}
\]
Using this we set\[
\om'_{\calB}(u):={P_{x^{(a)},y^{(b)}}^{(1)}}^{-1}(u)\]
 for all $u\in((\calB\#G)/G)(P(x^{(a)}),P(y^{(b)}))$.\\
\textbf{Degree adjuster}. Finally we define a degree adjuster $r_{\mathcal{B}}$
of $\om'_{\mathcal{B}}$ by\[
r_{\mathcal{B}}(P(x^{(a)})):=a\]
for all $P(x^{(a)})\in(\mathcal{B}\#G)/G$. 
\end{dfn}

\begin{lem}
$\om'_{\calB}=(\om'_{\calB},r_{\calB})$
is a degree-preserving functor, and hence a $1$-morphism
in $\GrCat$ for all $\calB \in\GrCat$.
\end{lem}

\begin{proof}
It is not hard to verify that $\om'_\calB$
turns out to be a functor.
We show that $\om'_{\calB}=(\om'_{\calB},r_{\mathcal{B}})$ is degree-preserving
(see Definition \ref{dfn-graded-cat}).
Let $P(x^{(a)}), P(y^{(b)}) \in (\calB \# G)/G$ and $c \in G$.
Then
$$
\begin{aligned}
\om'_\calB(((\calB\# G/G)^{r_\calB(y^{(b)})\cdot c}(P(x^{(a)}), P(y^{(b)}))
&=\om'(P_{x^{(a)}, y^{(b)}}^{(1)}((\calB\# G)(A_{bc}x^{(a)}, y^{(b)})))\\
&=(\calB\# G)(x^{(bca)}, y^{(b)})\\
&=\calB^{b\inv bca}(x,y)=\calB^{ca}(x,y)\\
&=\calB^{c\cdot r_\calB(x^{(a)})}(\om'_\calB(P(x^{(a)}), \om'_\calB(P(y^{(b)}))).
\end{aligned}
$$
\end{proof}

\begin{rmk}[cf.\ {\cite[Remark 5.9]{Asa-cover}}]
\label{rmk:necessity-weak}
As is seen above $\om'_\calB$ is not strictly degree-preserving in general.
This forced us to extend the definition of degree-preserving functors
from a strict version to a weak one.
\end{rmk}

\begin{lem}
$\om'$ is a strictly $2$-natural transformation.\end{lem}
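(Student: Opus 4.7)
The plan is to exploit the defining strict identity $\omega'_{\mathcal{B}}\circ(P,\psi)=Q_{\mathcal{B}}$ together with the uniqueness clause in the 2-universal property of the canonical $G$-covering functor $(P,\psi)\colon\mathcal{B}\#G\to(\mathcal{B}\#G)/G$ (i.e.\ the isomorphism $(P,\psi)^{*}\colon\Fun((\mathcal{B}\#G)/G,\mathcal{B}')\to\Inv(\mathcal{B}\#G,\mathcal{B}')$).

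For strict 1-naturality, fix $(H,r)\colon\mathcal{B}\to\mathcal{B}'$ in $\GrCat$. I plan to prove the desired equality
\[
(H,r)\circ\omega'_{\mathcal{B}}\;=\;\omega'_{\mathcal{B}'}\circ\bigl(((H,r)\#G)/G\bigr)
\]
via the following chain. First, a direct inspection on objects and morphisms gives the strict commutativity of the outer square at the level of $Q$, namely $(H,r)\circ Q_{\mathcal{B}}=Q_{\mathcal{B}'}\circ((H,r)\#G)$, because each side sends $x^{(a)}$ to $Hx$ and $f\in\mathcal{B}^{b^{-1}a}(x,y)$ to $H(f)$. Next, since $(H,r)\#G$ is strictly $G$-equivariant, the $?/G$-naturality square at this arrow, namely $(P',\psi')\circ((H,r)\#G)=(((H,r)\#G)/G)\circ(P,\psi)$, is strictly commutative as a diagram of $G$-invariant functors. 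Precomposing each side of the putative equality above with $(P,\psi)$ and using $\omega'\circ(P,\psi)=Q$ on both ends then shows that both composites yield the same $G$-invariant functor $(H,r)\circ Q_{\mathcal{B}}$ (whose invariance adjuster is trivial, since $\omega'$ kills $\psi$ by its defining equation). The isomorphism $(P,\psi)^{*}$ then forces the two functors to coincide. Finally, a short substitution into the composition rule for degree adjusters shows that each side carries the adjuster $P(x^{(a)})\mapsto ar_{x}$, and hence the two morphisms agree in $\GrCat$.

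For strict 2-naturality, let $\theta\colon(H,r)\to(H',r')$ be a 2-cell in $\GrCat$. I would evaluate both whiskerings pointwise at $P(x^{(a)})\in(\mathcal{B}\#G)/G$. The left side is
\[
(\theta*\omega'_{\mathcal{B}})_{P(x^{(a)})}\;=\;\theta_{\omega'_{\mathcal{B}}(P(x^{(a)}))}\;=\;\theta_{x}.
\]
For the right side, the construction of $?/G$ on 2-cells gives $((\theta\#G)/G)_{P(x^{(a)})}=P'((\theta\#G)_{x^{(a)}})=P'(\theta_{x})$, and then the defining equation of $\omega'_{\mathcal{B}'}$ on morphisms yields
\[
\bigl(\omega'_{\mathcal{B}'}*((\theta\#G)/G)\bigr)_{P(x^{(a)})}\;=\;\omega'_{\mathcal{B}'}(P'(\theta_{x}))\;=\;Q_{\mathcal{B}'}(\theta_{x})\;=\;\theta_{x}.
\]

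The main obstacle is the strict 1-naturality: what makes it strict, rather than merely up to natural isomorphism as was the case for $\omega$, is the rigidity of the defining equation $\omega'_{\mathcal{B}}\circ(P,\psi)=Q_{\mathcal{B}}$ combined with the strictness of $?/G$-naturality at a strictly $G$-equivariant arrow such as $(H,r)\#G$; these two ingredients produce a genuine equality of functors via the uniqueness half of the 2-universal property, after which only the bookkeeping of degree adjusters remains.
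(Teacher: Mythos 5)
Your proof is correct; the paper itself dismisses this lemma with the single word ``Straightforward,'' so you are supplying exactly the details the author omitted. Your route --- deducing strict 1-naturality from the defining equation $\om'_{\calB}(P,\psi)=Q_{\calB}$, the strict square $(H,r)Q_{\calB}=Q_{\calB'}((H,r)\#G)$, and the injectivity of $(P,\psi)^{*}$, then matching the degree adjusters $P(x^{(a)})\mapsto ar_{x}$ and checking the 2-cell condition pointwise --- is the natural argument implicit in the paper's construction of $\om'_{\calB}$ via the universal property.
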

\begin{proof}

Let $(H,r)\colon\mathcal{B}\to\mathcal{B}'$ be a 1-morphism in $\GrCat$
and $(P,\psi)\colon\mathcal{B}\#G\to(\mathcal{B}\#G)/G$,
$(P',\psi')\colon\mathcal{B}'\#G\to(\mathcal{B}'\#G)/G$ the canonical
functors.
We first show the 1-naturality of $\om'$, i.e., the commutativity of the diagram
$$
\xymatrix@C=10ex{
(\calB\# G)/G & (\calB'\# G)/G\\
\calB & \calB'.
\ar^{((H,r)\# G)/G}"1,1";"1,2"
\ar_{(H, r)}"2,1";"2,2"
\ar_{\om'_\calB}"1,1";"2,1"
\ar^{\om'_{\calB'}}"1,2";"2,2"
}
$$
To show this let $u\colon P(x^{(a)}) \to P(y^{(b)})$ be in $(\calB \# G)/G$
and $f:= P_{x^{(a)}, y^{(b)}}^{(1)}{}\inv (u)$.
Then
$$
[\om'_{\calB'}\circ ((H,r)\# G)/G](P(x^{(a)})) = \om'_{\calB'}(P'((Hx)^{(ar_x)}))
=Hx = [(H, r)\circ \om'_\calB](P(x^{(a)})),
$$
and
$$
\begin{aligned}
{[}\om'_{\calB'} \circ ((H,r)\# G)/G](u)
&\overset{(\mathrm{a})}{=} [((P', \ps')((H, r)\# G))^{(1)}_{x^{(a)}, y^{(b)}}](f)\\
&\overset{(\mathrm{b})}{=} (P', \ps')^{(1)}_{(Hx)^{(ar_x)}, (Hy)^{(br_y)}}(Hf)\\
& = Hf\\
&= [(H, r) \circ \om'_\calB](u),
\end{aligned}
$$
where  the equality (a) holds by definition of $((H,r)\# G)/G$
(see \eqref{eq:induced-funct} and Proof of \cite[Proposition 2.6 (3)]{Asa-cover}),
and the equality (b) follows from the fact that $(H, r)\# G$ is strictly
$G$-equivariant.

To show the 2-naturality of $\om'$ let $(H', r')\colon \calB \to \calB'$ be
another 1-morphism and $\th \colon (H, r) \To (H', r')$ a 2-morphism
 in $\GrCat$.
 It is enough to verify the following:
 $$
 \om'_{\calB'}((\th \# G)/G) = \th  \om'_\calB.
 $$
For each $P(x^{(a)}) \in (\calB \# G)/G$
we have
$$
\begin{aligned}
{[}\om'_{\calB'} ((\th \# G)/G)]P(x^{(a)}) &= \om'_{\calB'} ((\th \# G)/G)P(x^{(a)}))\\
&= \om'_{\calB'} (P'((\th \# G)(x^{(a)}))) \\
&= \om'_{\calB'}(P'(\th x))\\
&= \om'_{\calB'}(P'^{(1)}_{(Hx)^{(ar_x)},(Hy)^{(ar'_x)}}(\th x))\\
&= \th x\\
&= \th \om'_{\calB}(P(x^{(a)})).
\end{aligned}
$$
\end{proof}

\subsection{Remaining parts of the proof of Theorem \ref{main-thm}}

\subsubsection*{Verification of $(\ref{eq:ep'ep})$}

By definitions of $\ep$ and $\ep'$ the the equality (\ref{eq:ep'ep})
is obvious.

\subsubsection*{Verification of $(\ref{eq:epep'})$}

Let $\mathcal{C}\in\Cat$ and let $(P,\psi)\colon\mathcal{C}\to\mathcal{C}/G$
be the canonical functor. It is easy to see that we can define a natural
isomorphism $\Theta\colon\id_{(\mathcal{C}/G)\#G}\to\ep_{\mathcal{C}}\ep'_{\mathcal{C}}$
by\[
\Theta((Px)^{(a)}):=\psi_{a}x\]
for all $(Px)^{(a)}\in(\mathcal{C}/G)\#G$.

\subsubsection*{Verification of $(\ref{eq:om'om})$}

By definitions of $\om$ and $\om'$ the equality (\ref{eq:om'om})
is obvious.

\subsubsection*{Verification of $(\ref{eq:omom'})$}

Let $\mathcal{B}\in\GrCat$ and let $(P,\psi)\colon\mathcal{B}\#G\to(\mathcal{B}\#G)/G$
be the canonical functor. It is not hard to see that we can define
a natural isomorphism $\Xi\colon\om_{\mathcal{B}}\om'_{\mathcal{B}}\to\id_{(\mathcal{B}\#G)/G}$
by \[
\Xi(P(x^{(a)})):=\psi_{a}(x^{(1)})\]
for all $P(x^{(a)})\in(\mathcal{B}\#G)/G$.

The verifications that the pasting of (\ref{eq:sharp-G}) is equal
to the identity and that the pasting of (\ref{eq:slash-G}) is equal
to the identity are easy and are left to the reader.

This finishes the proof of Theorem \ref{main-thm}.$\qed$

\section{Equivalences in 2-categories $\Cat$ and $\GrCat$}

To distinguish several kinds of equivalences (resp.\ isomorphisms)
we call equivalences (resp.\ isomorphisms) between categories
{\em category equivalences}  (resp.\ {\em category isomorphisms}).
In this section we give characterizations of equivalences in the 2-categories
 $\Cat$ and $\GrCat$ and
examine relationships
\begin{enumerate}
\item[(a)] between $G$-equivariant functors that are category equivalences
and equivalences in the 2-category $\Cat$ (see Theorem \ref{eqvar-eq}), and
\item[(b)] between degree-preserving functors that are category equivalences
and equivalences in the 2-category $\GrCat$. (See Remark \ref{alternative-pf}(2).)
\end{enumerate}
Note that a category equivalence was characterized by 
a half of a pair of functors in mutually reverse directions,
namely a functor is a category equivalence if and only if it is a fully faithful, dense functor.
We give similar characterizations of equivalences in both 2-categories $\Cat$
and $\GrCat$.

\subsection{Equivalences in $\Cat$}

First we characterize equivalences in $\Cat$ in the following theorem.

\begin{thm}\label{eqvar-eq}
Let $(E,\rho)\colon\mathcal{C}\to\mathcal{C}'$ be a $G$-equivariant functor in $\Cat$.
Then the following are equivalent.
\begin{enumerate}
\item $(E, \rho)$ is an equivalence in $\Cat$;
\item $E$ is fully faithful and dense
$($i.e., $E$ is a category equivalence$)$.
\end{enumerate}
Thus what we called {\em $G$-equivariant equivalences} in earlier sections
are exactly the equivalences in $\Cat$.
\end{thm}

\begin{proof}
(1)\ \implies (2).
This is trivial.

(2)\ \implies (1).
Assume that $E$ is a category equivalence.
Then $E$ has a quasi-inverse $F \colon \calC' \to \calC$, which we may regard
as a right adjoint to $E$, and hence there exist a counit $\ep \colon EF \Rightarrow \id_{\calC}$
and a unit $\et \colon \id_{\calC'} \Rightarrow FE$, which are natural isomorphisms.
Since $(E, \ro)$ is $G$-equivariant, $\ro_a$ are natural isomorphisms for all $a \in G$.
Therefore we can construct $\la = (\la_a)_{a\in G}$ by the following commutative diagram:
$$
\xymatrix{
A_a F & FA_a\\
FEA_aF & FA_aEF.
\ar@{=>}^{\la_a}"1,1";"1,2"
\ar@{=>}_{F\ro_a\inv F}^{\iso}"2,1";"2,2"
\ar@{=>}_{\et A_aF}^{\iso}"1,1";"2,1"
\ar@{=>}_{FA_a\ep}^{\iso}"2,2";"1,2"
}
$$
By construction $\la_a$ are natural isomorphisms for all $a\in G$.
\begin{claim}
$(F, \la) \colon \calC' \to \calC$ is a $1$-morphism in $\Cat$.
\end{claim}
Indeed, let $a, b \in G$.
It is enough to show the commutativity of the diagram:
$$
\xymatrix{
A_bA_aF & A_bFA_a & FA_bA_a\\
A_{ba}F && FA_{ba}
\ar@{=>}^{A_b\la_a}"1,1";"1,2"
\ar@{=>}^{\la_bA_a}"1,2";"1,3"
\ar@{=>}_{\la_{ba}}"2,1";"2,3"
\ar@{=}"1,1";"2,1"
\ar@{=}"1,3";"2,3"
}
$$
This follows from the following commutative diagrams:
{\small
$$
\xymatrix{
&A_bA_aF & A_bFA_a & A_bFA_a &FA_bA_a & FA_{ba}\\
FEA_bA_aF & AbFEA_aF & A_bFA_aEF & FEA_bFA_a & FA_bEFA_a\\
& FEA_bFEA_aF &FEA_bEA_aEF\\
& FA_bEFEA_aF & FA_bEFA_aEF\\
& FAEFA_aEF & FA_bA_aEF &&& FA_bA_a\\
FA_bEA_aF && FA_bA_aEF
%%%%%%%%% horizontal arrows
\ar@{=>}^{A_b\la_a}"1,2";"1,3"
\ar@{=}"1,3";"1,4"
\ar@{=>}^{\la_bA_a}"1,4";"1,5"
\ar@{=}"1,5";"1,6"
\ar@{=>}^{\sim\ro_a\inv\sim}"2,2";"2,3"
\ar@{=>}_{\sim\ro_b\inv\sim}"2,4";"2,5"
\ar@{=>}_{\sim\ro_a\inv\sim}"3,2";"3,3"
\ar@{=>}_{\sim\ep\sim}"5,2";"5,3"
\ar@{=>}_{\sim\ro_a\inv\sim}"6,1";"6,3"
\ar@{=>}_{\sim\ep}"5,3";"5,6"
%%%%%%%%%%%%% diagonal arrows
\ar@{=>}_{\sim\ep}"3,3";"2,4"
\ar@{=>}_{\et\sim}"1,2";"2,1"
\ar@{=>}^{\sim\et\sim}"2,1";"3,2"
%%%%%%%%%%%%% vertical arrows
\ar@{=>}_{\sim\et\sim}"1,2";"2,2"
\ar@{=>}^{\sim\ep}"2,3";"1,3"
\ar@{=>}_{\et\sim}"1,4";"2,4"
\ar@{=>}_{\sim\ep\sim}"2,5";"1,5"
\ar@{=>}_{\et\sim}"2,2";"3,2"
\ar@{=>}_{\et\sim}"2,3";"3,3"
\ar@{=>}_{\sim\ro_b\inv\sim}"3,2";"4,2"
\ar@{=>}^{\sim\ro_b\inv\sim}"3,3";"4,3"
\ar@{=>}_{\sim\ro_a\inv\sim}"4,2";"5,2"
\ar@{=>}^{\sim\ep\sim}"4,3";"5,3"
\ar@{=}"1,6";"5,6"
\ar@{=>}_{\sim\ro_b\inv\sim}"2,1";"6,1"
\ar@{=}"5,3";"6,3"
%%%%%%%%%%%%%
\ar@{}|-{(*)}"4,1";"4,2"
}
$$
}
$$
\xymatrix{
FEA_bA_aF &FA_bEA_aF\\
FEA_{ba}F & FA_{ba}EF,
\ar@{=>}^{\sim\ro_b\inv\sim}"1,1";"1,2"
\ar@{=>}_{\sim\ro_{ba}\inv\sim}"2,1";"2,2"
\ar@{=}"1,1";"2,1"
\ar@{=>}^{\sim\ro_a\inv\sim}"1,2";"2,2"
}
$$
where the commutativity $(*)$ follows from the following commutative diagram:
$$
\xymatrix{
EA_bA_a & EA_bFEA_a\\
A_bEA_a & A_bEFEA_a\\
A_bEA_a\\
A_bA_aE & A_bEFA_aE.
%%%%%%% horizontal arrows
\ar@{=>}^-{\sim\et\sim}"1,1";"1,2"
\ar@{=>}^-{\sim\et\sim}"2,1";"2,2"
\ar@{=>}^{\sim\ep\sim}"4,2";"4,1"
%%%%%%% vertical arrows
\ar@{=>}_{\ro_b\inv\sim}"1,1";"2,1"
\ar@{=>}^{\ro_b\inv\sim}"1,2";"2,2"
\ar@{=>}^{\sim\ro_a\inv}"2,2";"4,2"
\ar@{=}"2,1";"3,1"
\ar@{=>}_{\sim\ro_a\inv}"3,1";"4,1"
%%%%%%% diagonal arrow
\ar@{=>}^{\sim\ep\sim}"2,2";"3,1"
}
$$
In the above the symbol $\sim$ stands for a functor that is uniquely determined in
the diagram.
\begin{claim}
$\ep \colon (E, \ro)(F, \la) \Rightarrow (\id_{\calC}, (\id_{A_a})_{a\in G})$ is a $2$-isomorphism
in $\Cat$.
\end{claim}
Indeed, it is enough to show that $\ep$ is a 2-morphism in $\Cat$, i.e.,
the following is commutative:
$$
\xymatrix{
A_aEF & A_a\id_{\calC'}\\
EFA_a & \id_{\calC'}A_a.
\ar@{=>}^{A_a\ep}_{\iso}"1,1";"1,2"
\ar@{=>}_{\ep A_a}^{\iso}"2,1";"2,2"
\ar@{=>}_{E\la_a \circ \ro_aF}"1,1";"2,1"
\ar@{=}"1,2";"2,2"
}
$$
This follows from the following commutative diagram:
$$
\xymatrix{
A_aEF\\
EA_aF & EA_aF & A_aEF\\
EFEA_aF & EFA_aEF & EFA_a & A_a.
%%%%% vertical arrows
\ar@{=>}_{\ro_a F}"1,1";"2,1"
\ar@{=>}_{\sim\et\sim}"2,1";"3,1"
%%%%%% horizontal arrows
\ar@{=}"2,1";"2,2"
\ar@{=>}_{\ro_a\inv F}"2,2";"2,3"
\ar@{=>}_{\sim\ro_a\inv \sim}"3,1";"3,2"
\ar@{=>}_{\sim\ep}"3,2";"3,3"
\ar@{=>}_{\ep\sim}"3,3";"3,4"
%%%%% diagonal arrows
\ar@{=}"1,1";"2,3"
\ar@{=>}^{\ep\sim}"3,1";"2,2"
\ar@{=>}_{\ep\sim}"3,2";"2,3"
\ar@{=>}^{\sim\ep}"2,3";"3,4"
}
$$
\begin{claim}
$\et \colon (\id_{\calC'}, (\id_{A'_a})_{a\in G}) \Rightarrow (F, \la)(E, \ro)$ is a $2$-isomorphism
in $\Cat$.
\end{claim}
Indeed, it is enough to show that $\et$ is a 2-morphism in $\Cat$, i.e.,
the following is commutative:
$$
\xymatrix{
A_aFE & A_a\id_{\calC}\\
FEA_a & \id_{\calC}A_a.
\ar@{=>}_-{A_a\et}^{\iso}"1,2";"1,1"
\ar@{=>}^{\et A_a}_{\iso}"2,2";"2,1"
\ar@{=>}_{F\ro_a \circ \la_aE}"1,1";"2,1"
\ar@{=}"1,2";"2,2"
}
$$
This follow from the following commutative diagram:
$$
\xymatrix{
A_a\\
A_a FE & FEA_a\\
FEA_aFE & FA_aE \\
FA_aEFE &FA_aE & FEA_a.
%%%%% vertical arrows
\ar@{=>}_{A_a\et}"1,1";"2,1"
\ar@{=>}_{\et\sim}"2,1";"3,1"
\ar@{=>}_{F\ro_a\inv}"2,2";"3,2"
\ar@{=}"3,2";"4,2"
\ar@{=>}_{\sim\ro_a\inv\sim}"3,1";"4,1"
\ar@{=>}"1,1";"2,1"
%%%%% horizontal arrows
\ar@{=>}_{\sim\ep\sim}"4,1";"4,2"
\ar@{=>}_{F\ro_a}"4,2";"4,3"
%%%%% diagonal arrows
\ar@{=>}^{\et\sim}"1,1";"2,2"
\ar@{=}"2,2";"4,3"
\ar@{=>}_{\sim\et}"2,2";"3,1"
\ar@{=>}^{\sim\et}"3,2";"4,1"
}
$$
These three claims show that $(E, \ro)$ is an equivalence in $\Cat$.
\end{proof}

\begin{rmk}
It is now trivial that the $G$-equivariant equivalence $\ep_{\calC} \colon \calC \to (\calC/G)\# G$
 is an equivalence in $\Cat$ by the theorem above.
\end{rmk}

\subsection{Equivalences in $\GrCat$}
Next we characterize equivalences in $\GrCat$.
We first define necessary terminologies.

\begin{dfn}
Let $\calA$ be a category and $\calB$ a $G$-graded category.
\begin{enumerate}
\item Let $E, F \colon \calA \to \calB$ be functors.
Then a natural transformation $\ep \colon E \Rightarrow F$
is called {\em homogeneous} if $\ep_x \colon Ex \to Fx$
are homogeneous in $\calB$ for all $x \in \calA_0$.
\item Let $\calS$ be a subclass of $\calB_0$ and $\calB'$ a full subcategory of $\calB$
with $\calB'_0 = \calS$.
Then $\calS$ (or $\calB'$) is said to be {\em homogeneously dense} in $\calB$ if
for each $x \in \calB_0$ there exists an $x' \in \calS$ such that
there exists a homogeneous isomorphism $x \to x'$.
\item A functor $F \colon \calA \to \calB$
is said to be {\em homogeneously dense}
if the object class $F(\calA_0)$ is homogeneously dense in $\calB$.
\end{enumerate}
\end{dfn}

We give two examples of homogeneously dense subcategories,
the latter will be used to give an alternative proof of the fact that
$\om_{\calB} \colon \calB \to (\calB \# G)/G$ is an
equivalence in $\GrCat$ in Remark \ref{alternative-pf}(1).

Recall that a $\k$-algebra $A$ is called {\em local} if the sum of non-invertible elements
is non-invertible and that if $A$ is local, then 0 and 1 are its only idempotents.

\begin{exm}\label{exm:hmg-dense}
Let $\calB$ be a $G$-graded $\k$-category and
$(P,\psi)\colon\mathcal{B}\#G\to(\mathcal{B}\#G)/G$
be the canonical functor.
\begin{enumerate}
\item If $\calB(x, x)$ are local $\k$-algebras for all $x \in \calB_0$,
then any dense full subcategory $\calB'$ of $\calB$ is homogeneously dense.
\item Let $\calB'$ be the full subcategory
of $(\calB \# G)/G$ with $\calB'_0:= \om_{\calB}(\calB_0) =\{P(x^{(1)}) \mid x \in \calB\}$
(see Definition \ref{dfn:omega}).
Then $\calB'$ is homogeneously dense in $(\calB \# G)/G$.
Hence $\om_{\calB} \colon \calB \to (\calB \# G)/G$ is homogeneously dense.
\end{enumerate}
Indeed, to show the statement (1) it is enough to show that if $x \iso y$ in $\calB$, then there exists
a homogeneous isomorphism in $\calB(x, y)$.
Now let $f \colon x \to y$ be an isomorphism in $\calB$.
We may assume that $x \ne 0$.
Write $f$ and $f\inv$
as finite sums: $f = \sum_{a\in G}f_a$
and $f\inv = \sum_{b\in G}g_b$ with $f_a\in \calB^a(x, y)$ and $g_b \in \calB^b(y, x)$
for all $a, b \in G$.
Then $\sum_{a, b \in G}g_bf_a = \id_x$ shows that $h:=g_bf_a$ is an automorphism of $x$
for some $a, b \in G$ because $\calB(x, x)$ is a local algebra.
Thus $(h\inv g_b)f_a=\id_x$ and $e:=f_a(h\inv g_b)$ is an idempotent in
$\calB(x, x)$, and hence $e=\id_x$ or $e=0$.
But $(h\inv g_b)ef_a=\id_x \ne 0$ shows that $e \ne 0$.
Hence $f_a \colon x \to y$
is a homogeneous isomorphism.

The statement (2) follows from the fact that
$\ps_{a,\, x} \colon P(x^{(1)}) \to P(x^{(a)})$
are homogeneous isomorphisms of degree $a$ in $(\calB \# G)/G$
for all $x \in \calB_0$ and all $a \in G$
(see proof of \cite[p.\ 131, Claim 4]{Asa-cover} for $\deg \ps_{a,x}$).
\end{exm}

We now give a characterization of equivalences in the 2-category $\GrCat$.

\begin{thm}\label{eq-GrCat}
Let $(H,r) \colon\mathcal{B}\to\mathcal{A}$ be a degree-preserving functor in $\GrCat$.
Then the following are equivalent.
\begin{enumerate}
\item $(H,r)$ is an equivalence in $\GrCat$.
\item $H \colon \calB \to \calA$ is a category equivalence with
a quasi-inverse $I$ as a left adjoint both of whose counit $\ep \colon IH \Rightarrow \id_{\calA}$
and unit $\et \colon \id_{\calB} \Rightarrow HI$ are homogeneous
natural isomorphisms.
\item $H$ is fully faithful and homogeneously dense.
\end{enumerate}
In $(2)$, $I$ is made into a quasi-inverse $(I, s)$ of $(H, r)$ with $\ep$ the counit and
$\et$ the unit in a unique way.
The degree adjuster $s$ is given by
\begin{equation}\label{dfn-deg-adj}
s = (s_x)_{x \in \calA_0} \text{ with } s_x:= (\deg\et_x)\inv\,r_{Ix}\inv \in G
\text{ for all }x \in \calA_0.
\end{equation}
\end{thm}

\begin{proof}
(2)\ \implies (1).
Assume the statement (2).
Set $t_x:= \deg \ep_x$ for all $x \in \calB_0$, and $t'_x:= \deg \et_x$ for all $x \in \calA_0$.
Define $s$ as in \eqref{dfn-deg-adj}, i.e.,
$s_x:= {t'_x}\inv\,r_{Ix}\inv \in G$
for all $x \in \calA_0$.
\setcounter{claim}{0}
\begin{claim}
$(I, s) \colon \calA \to \calB$ is a 1-morphism in $\GrCat$.
\end{claim}
Indeed,
let $x, y \in \calA_0$, $a \in G$ and $f \in \calA^a(x, y)$.
It is enough to show that $If \in \calB^{s_y\inv a s_x}(Ix, Iy)$.
Since $\et$ is a natural transformation we have $HIf = \et_y f \et_x\inv \in \calA^{t'_y}(y, HIy)
\calA^a(x, y) \calA^{{t'_x}\inv}(HIx, x) \subseteq \calA^{t'_y a {t'_x}\inv(HIx, HIy)}$.
Since $H$ is fully faithful, $H$ induces a bijection $\calB(Ix, Iy) \to \calA(HIx, HIy)$,
which also induces bijections
$$\calB^b(Ix, Iy) \to \calA^{{r_{Iy}}\inv b\, r_{Ix}}(HIx, HIy)$$
for all $b \in G$.  Applying this to $b$ with ${r_{Iy}}\inv b r_{Ix} = t'_y a {t'_x}\inv$, we have
$$
If \in \calB^{r_{Iy} t'_y a {t'_x}\inv r_{Ix}\inv}(Ix, Iy) = \calB^{s_y\inv a s_x}(Ix, Iy).
$$
\begin{claim}
$\ep \colon (I, s)(H, r) \Rightarrow (\id_{\calB}, 1)$ is a $2$-isomorphism in $\GrCat$.
\end{claim}
Indeed,
it is enough to show that $\ep$ is a 2-morphism in $\GrCat$.
This is equivalent to saying that $t_x = r_x s_{Hx}$ for all $x \in \calB_0$
because $(I, s)(H, r) = (IH, (r_x s_{Hx})_{x \in \calB_0})$.
Let $x \in \calB_0$.  Then since $(H\ep x) (\et Hx) = \id_{Hx}$,
we have $1 = \deg(H\ep x)\deg(\et Hx) = r_x\inv t_x r_{IHx} t'_{Hx}$.
Hence $r_x s_{Hx} = r_x {t'_{Hx}}\inv r_{IHx}\inv =r_x r_x\inv t_x = t_x$, as desired.
\begin{claim}
$\et \colon (\id_{\calA}, 1) \To (H, r)(I, s)$ is a $2$-isomorphism in $\GrCat$.
\end{claim}
Indeed,
it is enough to show that $\et$ is a 2-morphism in $\GrCat$.
This is equivalent to saying that $t'_x = r_{Ix}\inv s_x\inv$ for all $x \in \calA_0$
because $(H, r)(I, s) = (HI, (s_x r_{Ix})_{x\in \calA_0})$.
By definition $r_{Ix}\inv s_x\inv = r_{Ix}\inv r_{Ix}t'_x = t'_x$, as desired.

These three claims show that $(H, r)$ is an equivalence in $\GrCat$.
By looking at the proof of Claim 3, we see that the degree adjuster $s$ of $I$
is uniquely determined as in \eqref{dfn-deg-adj} by $\et$ and $r$.

(1)\ \implies (3).
Assume the statement (1), and let $(I, s)$ be a quasi-inverse of $(H, r)$
with 2-isomorphisms $\ep \colon (I, s)(H, r) \To \id_{\calB}$
and $\et \colon \id_{\calA} \To (H, r)(I, s)$.
Then $H(\calB_0) \supseteq HI(\calA_0)$, and the latter is homogeneously dense
in $\calA$ because $\et \colon \id_{\calA} \To HI$ is a homogeneous natural
isomorphism.

(3)\ \implies (2).
Assume the statement (3).
We can imitate the proof of (iii)\ \implies (ii) in \cite[p.\ 93, Theorem 1]{Ma}
to construct a quasi-inverse $I \colon \calA \to \calB$ as a left adjoint to $H$ and
a pair of a counit $\ep \colon IH \To \id_{\calB}$ and a unit
$\et \colon \id_{\calA} \To HI$.
Here we just give definitions of them.
Then it is enough to show that both $\ep$ and $\et$ are homogeneous natural isomorphisms.

{\bf Definition of $I$ and $\et$.}
Let $x \in \calA_0$.  Since $H$ is homogeneously dense, there exists a $y_x \in \calB_0$
such that there is a homogeneous isomorphism $\et_x \colon x \to Hy_x$.
Choose a pair $(y_x, \et_x)$ once for all $x$, and define
$Ix:= y_x$.
Then $\et_x \colon x \to HIx$ is a homogeneous isomorphism, and define
$\et:= (\et_x)_{x \in \calA_0}$.

Let $f \in \calA(x, x')$.  we define $If$ as follows.
Since $H$ is fully faithful, $H$ induces a bijection
$
H_{Ix, Ix'} \colon \calB(Ix, Ix') \to \calA(HIx, HIx').
$
Then define $If:= H_{Ix, Ix'}\inv(\et_{x'}f\et_x\inv)$ as in the following diagram:
$$
\xymatrix{
x && x'\\
HIx && HIx'\\
Ix && Ix'.
\ar^f"1,1"; "1,3"
\ar^{\et_{x'}f\et_x\inv}"2,1"; "2,3"
\ar_{If}"3,1"; "3,3"
\ar_{\et_x}"1,1"; "2,1"
\ar^{\et_{x'}}"1,3"; "2,3"
\ar@{|->}^{H}"3,1"; "2,1"
\ar@{|->}_{H}"3,3"; "2,3"
\ar@{|->}_{H}"3,2"; "2,2"
\ar@{}|{\circlearrowright}"1,2";"2,2"
}
$$

{\bf Definition of $\ep$.}
 Let $y \in \calB_0$.
 Then $Hy \in \calA_0$, and $\et_{Hy} \in \calA(Hy, HIHy)$.
 $H$ induces a bijection $H_{IHy,\, y} \colon \calB(IHy, y) \to \calA(HIHy, Hy)$.
 Then define $\ep_y:= H_{IHy,\, y}\inv(\et_{Hy}\inv)$.
 
 Then the same proof as in \cite[p.\ 93]{Ma} works (or it is straightforward) to show that
 $I$ is a left adjoint functor to $H$ with the unit $\et \colon \id_{\calA} \to HI$ and the counit
 $\ep:= (\ep_y)_{y \in \calB_0} \colon IH \To \id_{\calB}$.
 
 Now by definition $\et$ is a homogeneous natural isomorphism.
 It remains to show that $\ep_y$ are homogeneous isomorphisms for all $y \in \calB_0$.
 Since $\et_{Hy}$ is a homogeneous isomorphism, so is $\et_{Hy}\inv \in \calA(HIHy, Hy)$.
 Set $a:=\deg \et_{Hy}$.
 Since $(H, r)$ is a degree-preserving functor, the bijection
 $H_{IHy,\, y}$ induces a bijection
 $$
 \calB^{r_y a\, r_{IHy}\inv}(IHy, y) \to \calA^{a}(HIHy, Hy) \ni \et_{Hy}\inv.
 $$
 Hence $\ep_y = H_{IHy,\, y}\inv(\et_{Hy}\inv) \in \calB^{r_y b\, r_{IHy}\inv}(IHy, y)$
 and is a homogeneous isomorphism.

\end{proof}

The following is  immediate by Theorem \ref{eq-GrCat}.

\begin{cor}\label{iso-GrCat}
Let $(H,r) \colon\mathcal{B}\to\mathcal{A}$ be a $1$-morphism in $\GrCat$.
Then the following are equivalent.
\begin{enumerate}
\item $(H,r)$ is an isomorphism in $\GrCat$.
\item $H \colon \calB \to \calA$ is a category isomorphism.
\end{enumerate}
If this is the case, then the inverse of $(H, r)$ is given by
$$
(H, r)\inv = (H\inv, (r_{H\inv x}\inv)_{x \in \calA_0}).
$$
\end{cor}

\begin{rmk}\label{alternative-pf}
Let $\calB \in \GrCat_0$.
\begin{enumerate}
\item Theorem \ref{eq-GrCat} and Example \ref{exm:hmg-dense}(2)
give an immediate alternative proof of the fact
that $\om_{\calB} \colon \calB \to (\calB \# G)/G$ is an equivalence in $\GrCat$.
\item Also by Theorem \ref{eq-GrCat}, a degree-preserving functor
$(H, r) \colon \calA \to \calB$ in $\GrCat$  with $H$ a category equivalence
is an equivalence in $\GrCat$
if and only if $H$ is homogeneously dense.
In particular,
if $\calB(x, x)$ are local algebras for all $x \in \calB_0$, then 
all degree-preserving functors that are category equivalences are equivalences in $\GrCat$ 
by Example \ref{exm:hmg-dense}(1).
\end{enumerate}
\end{rmk}

Next we will give one more characterization of an equivalence in $\GrCat$
using the composite of degree preserving functors which are surjective, bijective and injective
on objects.
First we add necessary terminologies.

\begin{dfn}
Let $\calB$ be a $G$-graded category.
\begin{enumerate}
\item For each $x, y \in \calB$, we say that $x$ and $y$ are {\em homogeneously isomorphic}
(and write $x \iso_H y$) if there exists a homogeneous isomorphism $x \to y$.
Since the set of homogeneous isomorphisms in $\calB$ is closed under composition
and taking inverses, the relation $\iso_H$ on $\calB_0$ is an equivalence relation,
whose equivalence classes are called {\em homogeneous isoclasses}.
\item 
Let $\calB'$ be a full subcategory of $\calB$.
Then $\calB'$ is called a {\em homogeneous skeleton} of $\calB$
if $\calB'_0$ forms a complete set of representatives of homogeneous isoclasses in $\calB_0$.
Note that $\calB'$ is homogeneously dense in $\calB$ if and only if
it contains a homogeneous skeleton of $\calB$.
\end{enumerate}
\end{dfn}

\begin{lem}\label{lem:hmg-dense}
Let $\calB \in \GrCat_0$.
If $\calB'$ is a homogeneously dense full subcategory of $\calB$,
then the inclusion functor $S \colon \calB' \incl \calB$ induces an equivalence 
$(S, 1) \colon \calB' \to \calB$ in $\GrCat$.
\end{lem}
 
\begin{proof}
Note that $\calB'$ is again a $G$-graded category by setting
$\calB'^a(x, y):= \calB^a(x, y)$ for all $x, y \in \calB'_0$ and all $a \in G$, and hence
$(S, 1) \colon \calB' \to \calB$ is a degree-preserving functor.
Then the assertion following by Theorem \ref{eq-GrCat}.
\end{proof}

\begin{prp}\label{eq-GrCat-decomp}
Let $(H, r) \colon \calB \to \calA$ be a degree-preserving functor in $\GrCat$.
Then the following are equivalent.
\begin{enumerate}
\item $(H, r)$ is an equivalence in $\GrCat$.
\item There exist homogeneously dense full subcategories $\calB'$ and $\calA'$
of $\calB$ and $\calA$, respectively and a homogeneous natural isomorphism
$$\ze \colon (H, r) \To (S', 1)(H', r')(N, s),$$
where $S \colon \calB' \incl \calB$ and $S' \colon \calA' \incl \calA$ are inclusion functors,
and $(N, s)$ is a quasi-inverse of the equivalence $(S, 1)$ in $\GrCat$.
\end{enumerate}
\end{prp}

\begin{proof}
(2)\ \implies (1).
This immediately follows by Theorem \ref{eq-GrCat}.

(1)\ \implies (2).
Assume the statement (1).
Then there exist degree-preserving functors
$(H, r) \colon \calB \to \calA$ and
$(I, s) \colon \calA \to \calB$, and 2-isomorphisms
$\ep \colon (I,s)(H, r) \To (\id_{\calB}, 1)$ and
$\et \colon (\id_{\calB}, 1) \To (H, r)(I,s)$ in $\GrCat$.
Let $\calB'$ be a {\em homogeneous skeleton} of $\calB$.
Then $\calB'$ is homogeneously dense in $\calB$.
Let $\calA'$ be the full subcategory of $\calA$ with $\calA'_0:= H(\calB'_0)$.
Then we claim that $\calA'$ is a homogeneous skeleton of $\calA$.
Indeed, let $x \in \calA_0$.
Then by construction there exist an $x' \in \calB'$ and
a homogeneous isomorphism $f \colon Ix \to x'$ in $\calB$.
Hence we have homogeneous isomorphisms
$x \ya{\et_x} HIx \ya{Hf} Hx'$ in $\calA$.
Thus $x \iso_H Hx' \in \calA'_0$,
which shows that $\calA'$ is
homogeneously dense in $\calA$.
Next assume that there exists a homogeneous isomorphism $g \colon Hx \to Hy$
for some $x, y \in \calB'_0$.
Then we have homogeneous isomorphisms
$x \xleftarrow{\ep_x} IHx \ya{Ig} IHy \ya{\ep_y} y$.
Thus $x \iso_H y$, and hence $x = y$.
As a consequence
\begin{equation}\label{injective}
 x \ne y \text{ implies } Hx \not\iso_H Hy.
\end{equation}
This proves the claim.
Now let
$S \colon \calB' \incl \calB$ and $S' \colon \calA' \incl \calA$ be inclusion functors,
and as in the proof of Theorem \ref{eq-GrCat}
construct a quasi-inverse $(N, s)$ of the equivalence $(S, 1)$ in $\GrCat$ as a left adjoint
with a counit $\id_{\id_{\calB'}} \colon NS = \id_{\calB'}$ and a unit $\nu \colon \id_{\calB} \To SN$.
Then $s_x=(\deg \nu_x)\inv$ for all $x \in \calB_0$.
The implication \eqref{injective} also shows that $H$ induces a bijection $\calB'_0 \to \calA'_0$.
As $H$ is fully faithful, $H$ induces a category isomorphism $H' \colon \calB' \to \calA'$
that satisfies $S'H' = HS$.  Let $r'$ be the restriction of $r$ to $\calB'_0$.
Then $(H', r') \colon \calB' \to \calA'$ is a degree-preserving functor,
which turns out to be an isomorphism in $\GrCat$ by Corollary \ref{iso-GrCat}.
Now $\ze:=H\nu$ is a homogeneous natural isomorphism $H \To HSN = S'H'N$.
It remains to show that $\ze$ is a 2-morphism
$(H, r) \To (S', 1)(H', r')(N, s) = (S'H'N, (s_xr'_{Nx})_{x\in \calB_0})$
in $\GrCat$.
For this it is enough to show that $\deg H\nu_x = (s_xr'_{Nx})\inv r_x$
for all $x \in \calB_0$.
Now since $\deg \nu_x = s_x\inv$ and $\nu_x \colon x \to SNx = Nx$, 
we have $\deg H\nu_x = r_{Nx}\inv s_x\inv r_x = (s_xr'_{Nx})\inv r_x$,
as desired.
\end{proof}

The following is immediate by Proposition \ref{eq-GrCat-decomp} and Lemma \ref{lem:hmg-dense}.

\begin{cor}
Let $\calB, \calA \in \GrCat_0$.
Then the following are equivalent.
\begin{enumerate}
\item $\calB \simeq \calA$ in $\GrCat$.
\item There exist homogeneously dense full subcategories $\calB'$ and $\calA'$
of $\calB$ and $\calA$, respectively such that $\calB' \iso \calA'$ in $\GrCat$.
\end{enumerate}

\end{cor}

\end{document}